\documentclass[12pt]{article}
\usepackage{amssymb}
\usepackage{amsfonts}
\usepackage{amscd}
\usepackage{mathrsfs}
\usepackage[reqno]{amsmath}
\usepackage{amsthm,upref,amscd}
\topmargin -1cm \oddsidemargin 0.3cm \evensidemargin 0.7cm
\textwidth 155mm \textheight 230mm \theoremstyle{plain}

\newtheorem{Thm}{Theorem}[section]

\newtheorem{Lem}[Thm]{Lemma}
\newtheorem{Cor}[Thm]{Corollary}
\newtheorem{Prop}[Thm]{Proposition}

\theoremstyle{definition}

\numberwithin{equation}{section}

\linespread{1.1}

\def\R{\mathbb{R}}
\def\N{\mathbb{N}}
\def\lb{\lambda}
\def\va{\varepsilon}
\def\wt{\widetilde}
\def\cj{{\cal J}}
\def\di{\displaystyle\int}
\def\df{\displaystyle\frac}

\begin{document}
\thispagestyle{empty}

\title{On the existence of positive solutions for a quasilinear Schr\"{o}dinger equation\thanks{Supported
by NSFC (11001008, 11271264)}}
\author{Haidong Liu$^{\rm 1}$,\ \ Leiga Zhao$^{\rm 2}$
\vspace{2mm}\\
{$^{\rm 1}$\small College of Mathematics, Physics and Information Engineering, Jiaxing University}\\
{\small Zhejiang 314001, P.R. China}\\
{$^{\rm 2}$\small Department of Mathematics, Beijing University of Chemical Technology}\\
{\small Beijing 100029, P.R. China}}
\date{}
\maketitle

\begin{abstract}
This paper is concerned with the quasilinear Schr\"{o}dinger equation
\begin{equation*}
-\Delta u+V(x)u- \Delta(u^2)u =h(u),   \ \ \mbox{in} \  \R^N,
\end{equation*}
where $N\geq 3$. Under appropriate assumptions on $V$ and $h$, we establish the existence of
positive solutions. The main novelty is that, unlike most other papers on this problem, we do
not assume $h$ is 4-superlinear at infinity.

\noindent{\bf Keywords:} Quasilinear Schr\"{o}dinger equation, radial potential, well potential,
positive solution.

\noindent{\bf Mathematics Subject Classification:} 35J10, 35J20, 35J60.

\end{abstract}

\section{Introduction and main results}

\hspace*{\parindent}In this paper, we consider the quasilinear
Schr\"{o}dinger equation
\begin{equation}\label{eq1.1}
-\Delta u+V(x)u- \Delta(u^2)u =|u|^{p-2}u, \quad\mbox{in} \  \R^N,
\end{equation}
where $N\geq 3,\ 2<p<2\cdot 2^*$,\ $2^*=\frac{2N}{N-2}$ is the critical Sobolev exponent and
$V$ is a continuous function. It is known that, via the ansatz $z(t,x)=e^{-iEt}u(x)$, solutions
of problem \eqref{eq1.1} correspond to stationary waves of
\begin{equation*}
i\partial_tz=-\Delta z+W(x)z- \Delta(|z|^2)z -|z|^{p-2}z, \quad\mbox{in} \  \R\times\R^N,
\end{equation*}
where $W=V+E$ is a new potential. Quasilinear Schr\"{o}dinger equations of this type arise
in plasma physics, see e.g. \cite{K,LSS} for details on the physical background.

The natural energy functional corresponding to \eqref{eq1.1} is given by
$$\Phi(u)=\df{1}{2}\di_{\R^N}(|\nabla u|^2+V(x)u^2)\ dx+\di_{\R^N}u^2|\nabla u|^2\ dx
-\df{1}{p}\di_{\R^N}|u|^p\ dx,$$
which is not well defined in $H^1(\R^N)$. Due to this fact, the
usual variational methods can not be applied directly. This
difficulty makes problems like \eqref{eq1.1} interesting and
challenging. Indeed, during the last ten years, there have been a
considerable amount of researches on such problems. Many existence
and multiplicity results were proved by different approaches, such
as minimizations \cite{LW,PSW}, change of variables
\cite{CJ,FS,LWW}, Nehari method \cite{LWW2} and perturbation method
\cite{LLW}. In \cite{LWW}, by a suitable change of variables, the
quasilinear problem \eqref{eq1.1} was reduced to a semilinear one
and existence reults were given in the cases of bounded, radial or
coercive potential in an Orlicz space framework. By similar change
of variables, Colin and Jeanjean \cite{CJ} investigated the new
functional in $H^1(\R^N)$. They established the existence of
solutions of (\ref{eq1.1}) with $V(x)\equiv 1$ and the general
nonlinearity introduced by Berestycki and Lions \cite{BL}. Moreover,
under the following variant Ambrosetti-Rabinowitz condition
\begin{enumerate}
\item[$(AR)$] there exists $\mu>4$ such that $0<\mu \di_0^t h(s)\ ds\leq h(t)t$ for all $t\in\R^+$,
\end{enumerate}
the existence result was also obtained for the well potential and
the power nonlinearity  $|u|^{p-2}u$ replaced by $h$.

It is worth pointing out that most of these results are based on the
condition $4\leq p<2\cdot 2^*$. As observed in \cite{LWW2}, the
number $2\cdot 2^*$ behaves as a critical exponent for problem
\eqref{eq1.1}. In fact, nonexistence result can be formulated when
$p\geq 2\cdot 2^*$ by a Pohozaev type identity.

To the best of our knowledge, very few results are known about
problem \eqref{eq1.1} with $p\in (2,4)$. We are only aware of the
papers \cite{CJ,G,LW,PSW,RS}. In \cite{LW,PSW}, an unknown Lagrange
multiplier appears in the equation. In \cite{CJ}, an existence
result was given for constant potential. In \cite{G}, semiclassical
solution of \eqref{eq1.1} was studied and it was shown that there
exists a positive solution which concentrates at a local minimum of
the potential.  Recently, Ruiz and Siciliano  \cite{RS} proved the
existence of a positive ground state solution of \eqref{eq1.1} with
$2<p<2\cdot 2^*$. The proof is based on a constrained minimization
procedure. We remark that a concavity hypothesis was imposed on the
potential, which is technique and important in their arguments. For
more references related to \eqref{eq1.1}, we refer the reader to
\cite{DS,DMS,GT,LW2,LWG,LLW2,SV,WZ}.

Inspired by \cite{CJ,RS}, we consider the following equation
\begin{equation}\label{eq1.2}
-\Delta u+V(x)u- \Delta(u^2)u =h(u),   \ \ \mbox{in} \  \R^N,
\end{equation}
where $N\geq 3$ and $V\in C^1(\R^N,\R)$ satisfies the following conditions:
\begin{enumerate}
\item[$(V_1)$] $V(x)=V(|x|)$ and $0<V_0\leq V(x)\leq V_1<\infty$ for all $x\in \R^N$;
\item[$(V'_1)$] $0<V_0\leq V(x)\leq V_\infty:=\lim\limits_{|x|\to \infty} V(x)<\infty$ for all $x\in\R^N$;
\item[$(V_2)$] there exists $\alpha\in [1,2)$ such that
$$\left(\nabla V(x)\cdot x\right)^+\in L^{\frac{2^*}{2^*-\alpha}}(\R^N),$$
where $\left(\nabla V(x)\cdot x\right)^+=\max\{\nabla V(x)\cdot
x,0\}$.
\end{enumerate}
For the nonlinearity $h$, we assume:
\begin{enumerate}
\item[$(h_1)$] $h\in C(\R,\R)$,\ $h(t)=0$ for $t\leq 0$ and $h(t)=o(t)$ as $t\to 0^+$;
\item[$(h_2)$] there exists $C>0$ and $q\in \left(2,2\cdot 2^*\right)$ such that
    $$|h(t)|\leq C(t+t^{q-1}),\quad \mbox{for all}\ t\in \R^+;$$
\item[$(h_3)$] $\lim\limits_{t\to+\infty}\frac{h(t)}{t}=+\infty$;
\item[$(h_4)$] $H(t)=\int_0^t h(s) ds\geq 0$ for all $t\in\R^+$.
\end{enumerate}

The first two results of this paper are the following theorems.

\begin{Thm} \label{thm1}
Suppose that $(V_1),\ (V_2)$ and $(h_1)-(h_4)$ hold. Then problem \eqref{eq1.2} has at least
a positive solution.
\end{Thm}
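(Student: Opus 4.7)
The plan is to combine the Colin--Jeanjean change of variables with Jeanjean's monotonicity trick, working in the radial subspace of $H^1(\R^N)$ to recover compactness. First, I would introduce the odd function $f\in C^2(\R,\R)$ defined by $f'(t)=1/\sqrt{1+2f^2(t)}$, $f(0)=0$, and set $u=f(v)$. A direct computation reduces the ill-defined functional $\Phi$ to the $C^1$ semilinear functional
$$J(v)=\df{1}{2}\di_{\R^N}\bigl(|\nabla v|^2+V(x)f^2(v)\bigr)\,dx-\di_{\R^N}H(f(v))\,dx$$
on $H^1(\R^N)$; its nonnegative nontrivial critical points correspond, via $u=f(v)$, to positive solutions of \eqref{eq1.2}. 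Using the pointwise bounds $|f(t)|\le\min\{|t|,(2|t|)^{1/2}\}$, hypotheses $(h_1)$--$(h_3)$, and the two-sided bound on $V$ from $(V_1)$, the standard mountain-pass geometry is easily verified. Since $V$ is radial, $J$ is $O(N)$-invariant, so by Palais' symmetric criticality principle it suffices to work in $H^1_r(\R^N)$, whose embedding into $L^s(\R^N)$ is compact for every $s\in(2,2^*)$.

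The main obstacle is that $(h_1)$--$(h_4)$ do \emph{not} imply any Ambrosetti--Rabinowitz-type inequality, so Cerami sequences for $J$ need not be bounded. I would bypass this by embedding $J$ in the family
$$J_\lb(v)=\df{1}{2}\di_{\R^N}\bigl(|\nabla v|^2+V(x)f^2(v)\bigr)\,dx-\lb\di_{\R^N}H(f(v))\,dx,\qquad\lb\in[1/2,1],$$
and invoking Jeanjean's monotonicity theorem to obtain, for almost every $\lb\in[1/2,1]$, a bounded Cerami sequence at the mountain-pass level $c_\lb$. Radial compactness promotes such a sequence to a nontrivial critical point $v_\lb$ of $J_\lb$.

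Finally I would pass from $\lb$ close to $1$ to $\lb=1$ via a uniform $H^1$ bound. A Pohozaev-type identity for $v_\lb$, combined with $\langle J_\lb'(v_\lb),v_\lb\rangle=0$, produces an expression in which the only obstruction is the term $\int(\nabla V(x)\cdot x)\,f^2(v_\lb)\,dx$. Since $f^2(t)\leq\min\{t^2,2|t|\}$, an elementary interpolation gives $f^2(v_\lb)\leq C|v_\lb|^\alpha$ for the exponent $\alpha\in[1,2)$ of $(V_2)$; H\"older's inequality with the conjugate pair $2^*/(2^*-\alpha)$ and $2^*/\alpha$, together with the Sobolev embedding $H^1\hookrightarrow L^{2^*}$, then controls this term by $\|(\nabla V\cdot x)^+\|_{L^{2^*/(2^*-\alpha)}}\|v_\lb\|_{H^1}^\alpha$, yielding the desired uniform bound as $\lb\to 1^-$. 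Radial compactness produces a weak limit $v_*\not\equiv 0$ that is a critical point of $J=J_1$. Testing $J'(v_*)$ against $v_*^-$ and using $h(s)=0$ for $s\leq 0$ from $(h_1)$ (together with the sign of $f(v_*)f'(v_*)$ on $\{v_*<0\}$) forces $v_*\ge 0$, and the strong maximum principle applied to the semilinear equation satisfied by $v_*$ yields $v_*>0$; hence $u=f(v_*)$ is the sought positive solution. The main difficulty lies in the Pohozaev a priori bound, where assumption $(V_2)$ is precisely tuned to close the argument.
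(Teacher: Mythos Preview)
Your proposal is correct and follows essentially the same route as the paper: the Colin--Jeanjean change of variables, restriction to $H^1_r(\R^N)$, Jeanjean's monotonicity trick applied to the family $I_\lb$ on $[\tfrac12,1]$, and a Pohozaev-based uniform $H^1$ bound on the critical points $v_{\lb_n}$ as $\lb_n\to 1$, where $(V_2)$ and the inequality $|f(t)|^2\le 2^{1/2}|t|^\alpha$ control the term $\int(\nabla V\cdot x)^+f^2(v_{\lb_n})\,dx$. One small clarification: the gradient bound in the paper comes from combining the Pohozaev identity with the \emph{energy} bound $I_{\lb_n}(v_{\lb_n})=c_{\lb_n}\le c_{1/2}$ (this eliminates $\int H(f(v))$), while $\langle I_{\lb_n}'(v_{\lb_n}),v_{\lb_n}\rangle=0$ is used in a second step to bound $\int V(x)f^2(v_{\lb_n})\,dx$; your outline compresses these two steps but the ingredients are the same.
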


\begin{Thm} \label{thm2}
Suppose that $(V'_1),\ (V_2)$ and $(h_1)-(h_4)$ hold. Then problem \eqref{eq1.2} has at least
a positive solution.
\end{Thm}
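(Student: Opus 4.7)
My plan is to mirror the variational scheme underlying Theorem~\ref{thm1}, replacing the radial compactness $H^1_r(\R^N)\hookrightarrow L^q(\R^N)$ by a concentration--compactness comparison with the problem at infinity. As in \cite{CJ,LWW}, I first employ the dual change of variables $u=f(v)$, where $f\in C^2(\R)$ is odd, $f(0)=0$, and $f'(t)=(1+2f(t)^2)^{-1/2}$ for $t\geq 0$. This transforms \eqref{eq1.2} into the semilinear equation
$$-\Delta v+V(x)f(v)f'(v)=h(f(v))f'(v),$$
with natural energy
$$J(v)=\tfrac{1}{2}\int_{\R^N}\bigl(|\nabla v|^2+V(x)f(v)^2\bigr)\,dx-\int_{\R^N}H(f(v))\,dx,$$
which is $C^1$ on $H^1(\R^N)$. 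The pointwise bounds $|f(t)|\leq|t|$ and $|f(t)|\leq 2^{1/4}|t|^{1/2}$, together with $(h_1)$--$(h_4)$ and $q<2\cdot 2^*$, give $J$ the mountain pass geometry on $H^1(\R^N)$.

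Since $(AR)$ is not assumed, I apply Jeanjean's monotonicity trick to the family $J_\lambda(v)=\tfrac12\int(|\nabla v|^2+V(x)f(v)^2)\,dx-\lambda\int H(f(v))\,dx$, $\lambda\in[\tfrac12,1]$. For a.e.\ such $\lambda$, $J_\lambda$ admits a bounded Palais--Smale sequence at its mountain pass level $c_\lambda$, hence a critical point $v_\lambda$. Letting $\lambda_k\uparrow 1$ and invoking the Pohozaev identity satisfied by each $v_{\lambda_k}$---in which $(V_2)$ bounds $|\int(\nabla V\cdot x)f(v_{\lambda_k})^2\,dx|$ through the H\"older pair $(2^*/(2^*-\alpha),\,2^*/\alpha)$ combined with the estimate $f(v)^2\lesssim|v|$---I show that $\{v_{\lambda_k}\}$ is bounded in $H^1(\R^N)$ and forms an approximate Palais--Smale sequence for $J$ at level $c=\lim_{\lambda\uparrow1}c_\lambda$.

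For compactness I introduce the limit functional $J_\infty$, obtained by replacing $V$ with $V_\infty$, and its mountain pass level $c_\infty$. Provided $V\not\equiv V_\infty$ (the equality case being covered by \cite{CJ}), $(V'_1)$ gives $V\leq V_\infty$ with strict inequality on a set of positive measure, so evaluating $J$ along an optimal path for $J_\infty$ delivers
$$c<c_\infty.$$
For the bounded Palais--Smale sequence $v_n\rightharpoonup v$, a Brezis--Lieb-type splitting of $\int V(x)f(v_n)^2\,dx$ and $\int H(f(v_n))\,dx$, combined with the decay $V(x)-V_\infty\to 0$, shows that $w_n:=v_n-v$ is an approximate Palais--Smale sequence for $J_\infty$ at level $c-J(v)$. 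If $w_n\not\to 0$, Lions' concentration principle produces translates $w_n(\cdot-y_n)\rightharpoonup\tilde w\neq 0$ that solve the $J_\infty$ equation, whence $c-J(v)\geq c_\infty$---contradicting $c<c_\infty$ since $J(v)\geq 0$. Hence $v_n\to v$ strongly and $v$ is a nontrivial critical point of $J$.

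Since $h\equiv 0$ on $(-\infty,0]$ by $(h_1)$, testing the equation against $v^-$ yields $v\geq 0$; the strong maximum principle then forces $v>0$, so $u=f(v)>0$ is a positive solution of \eqref{eq1.2}. The principal difficulty lies in the splitting argument of the previous paragraph: the nonlinear coupling of $V$ with $f(v)$ obstructs a direct Brezis--Lieb decomposition, and one must exploit the Lipschitz character and subcritical growth of $f$, together with $(V_2)$, to ensure both that the error terms vanish in $L^1$ and that any escaping bubble carries energy at least $c_\infty$.
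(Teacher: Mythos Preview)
Your overall strategy matches the paper's, but there is a genuine gap at the step ``hence a critical point $v_\lambda$''. In the well-potential setting you have no compact embedding, so the bounded Palais--Smale sequence that Jeanjean's trick produces for $J_\lambda$ need not converge strongly; its weak limit could be zero, and even when it is nonzero you have no a priori control on $J_\lambda(v_\lambda)$. This matters because the Pohozaev argument you invoke for the boundedness of $\{v_{\lambda_k}\}$ requires a uniform energy bound $J_{\lambda_k}(v_{\lambda_k})\le c_{1/2}$ (compare the paper's estimate \eqref{eq3.1}). The paper closes this gap by carrying out the concentration--compactness comparison at \emph{every} $\lambda\in[\tfrac12,1]$, not only at $\lambda=1$: Lemma~\ref{lem7} gives $c_\lambda<m_\lambda^\infty$ for all such $\lambda$, and Lemma~\ref{lem8} (built on the Adachi--Watanabe global decomposition, Proposition~\ref{prop3}) then extracts a nontrivial critical point $v_\lambda$ with $I_\lambda(v_\lambda)\le c_\lambda$.

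A second omission is the uniform lower bound $\|v_\lambda\|\ge\sigma>0$ independent of $\lambda$ (Lemma~\ref{lem9}). Without it, even granting nontrivial $v_{\lambda_k}$, the sequence could satisfy $\|v_{\lambda_k}\|\to 0$ as $\lambda_k\to 1$, and your final bounded Palais--Smale sequence for $J$ would then converge to zero, yielding nothing. Note also that one only obtains $\limsup_{k}J(v_{\lambda_k})\le c_1$, not convergence to $c_1$, since Lemma~\ref{lem8} gives $I_\lambda(v_\lambda)\le c_\lambda$ rather than equality. Finally, the claim ``$J(v)\ge 0$'' in your splitting argument is unjustified under the present hypotheses; the paper instead argues by dichotomy---if $I_\lambda(v_\lambda)<0$ then $v_\lambda\ne 0$ trivially, while if $I_\lambda(v_\lambda)\ge 0$ the strict inequality $c_\lambda<m_\lambda^\infty$ forces $l=0$ in the decomposition and hence strong convergence.
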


\noindent{\bf Remark 1.1.} {(1) Compared with \cite{RS}, the current paper deals with a more
general nonlinearity and conditions on the potential are different. Especially, we do not need
concavity hypothesis which is essential in their arguments. Thus our results can be regarded
as complements of Theorem 1.1 in \cite{RS}.\\
(2) Condition $(V_2)$ shall be used to prove the boundedness of a special Palais-Smale sequence.
Similar conditions can be found in \cite{AP,JT}. It should be mentioned that, due to the well
properties of the transformation (see Lemma \ref{lem1} and Corollary \ref{cor1}), we only
need a weaker condition than the one in \cite{JT}.\\
(3) We point out that $(h_4)$ is assumed for the sake of simplicity
and it can be dropped. In fact, by $(h_1)$ and $(h_3)$, we can
choose a large positive constant $M$ such that $h(t)+Mt\geq 0$ for
all $t\in\R^+$. Then, applying the arguments in this paper to the
equation
\begin{equation*}
-\Delta u+(V(x)+M)u- \Delta(u^2)u =h(u)+Mu,   \ \ \mbox{in} \  \R^N,
\end{equation*}
we obtain a positive solution of \eqref{eq1.2}.\\
(4) In Section 6, we apply our methods to problem \eqref{eq1.2} with
a nonlinearity of Berestycki and Lions type \cite{BL}, see Theorem
\ref{thm4}.}

\vskip 3mm

The second part of this paper is devoted to problem \eqref{eq1.2} with a parameter
\begin{equation}\label{eq1.3}
-\Delta u+V(x)u- \Delta(u^2)u =\mu h(u),   \ \ \mbox{in} \  \R^N,
\end{equation}
where $N\geq 3$ and $V\in C^1(\R^N,\R)$ satisfies $(V_1)$ and
\begin{enumerate}
\item[$(V'_2)$] $\nabla V(x)\cdot x\leq 0$ for all $x\in \R^N$.
\end{enumerate}
For the nonlinearity $h$, we only assume the following conditions near the origin:
\begin{enumerate}
\item[$(h'_1)$] $h\in C(\R,\R)$,\ $h(t)=0$ for $t\leq 0$ and there exists $q\in \left(2,2^*\right)$ such that
$$\limsup\limits_{t\to 0^+}\frac{h(t)t}{t^{q}}<+\infty;$$
\item[$(h'_2)$] there exists $p\in\left(2,2^*\right)$ such that
$$\liminf\limits_{t\to 0^+} \frac{H(t)}{t^{p}}>0.$$
\end{enumerate}

\begin{Thm} \label{thm3}
Suppose that $(V_1),\ (V'_2)$ and $(h'_1)-(h'_2)$ hold. If
\begin{equation}\label{eq1.4}
p-q<\df{2(p-2)(2^*-p)}{2^*(2^*-2)-2(p-2)},
\end{equation}
then there exists $\mu_0>0$ such that, for any $\mu>\mu_0$, problem \eqref{eq1.3} has at least
a positive solution.
\end{Thm}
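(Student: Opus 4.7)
The plan is to treat \eqref{eq1.3} for large $\mu$ as a perturbation of the local behaviour of $h$ near the origin, via a truncation combined with the Colin--Jeanjean change of variable. First, fix $a>0$ small enough that $(h'_1)$ gives $|h(t)|\le Ct^{q-1}$ and $(h'_2)$ gives $H(t)\ge c t^p$ on $[0,a]$, and replace $h$ by a modification $\tilde h$ which agrees with $h$ on $[0,a]$, vanishes on $(-\infty,0]$, and admits a controlled subcritical extension outside $[0,a]$; any positive solution $u$ of the modified problem with $\|u\|_\infty\le a$ is then a solution of \eqref{eq1.3}. Following \cite{CJ}, introduce $f$ via $f'(t)=(1+2f(t)^2)^{-1/2}$, $f(0)=0$, and on the radial subspace $H^1_r(\R^N)$ (available thanks to $(V_1)$) consider
$$\tilde J_\mu(v)=\df{1}{2}\di_{\R^N}|\nabla v|^2\,dx+\df{1}{2}\di_{\R^N}V(x)f(v)^2\,dx-\mu\di_{\R^N}\tilde H(f(v))\,dx,$$
whose critical points correspond, through $u=f(v)$, to weak solutions of the modified equation. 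The growth of $\tilde h$, the properties $|f(t)|\le|t|$ and $f(t)/t\to 1$ as $t\to 0$, and compactness of $H^1_r(\R^N)\hookrightarrow L^s(\R^N)$ for $s\in(2,2^*)$, make $\tilde J_\mu\in C^1(H^1_r,\R)$ with mountain-pass geometry. Testing along a ray $t\mapsto t\varphi$ with $\varphi$ a fixed smooth bump, and using $(h'_2)$ in the amplitude range $t\varphi\le a$ (which contains the maximiser of $\tilde J_\mu(t\varphi)$ once $\mu$ is large), yields the quantitative bound
$$\tilde c_\mu\le C\mu^{-\frac{2}{p-2}}\longrightarrow 0\quad(\mu\to\infty).$$

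Next, I produce a bounded Palais--Smale sequence at level $\tilde c_\mu$ by the Jeanjean scaling/monotonicity trick $v_t(x)=v(x/t)$: this generates an almost-Pohozaev identity whose extra term $\di_{\R^N}(\nabla V\cdot x)f(v)^2\,dx$ is nonpositive thanks to $(V'_2)$, and this nonpositive contribution is exactly what is required to conclude $H^1$-boundedness. Compactness of the radial embedding then extracts a nontrivial critical point $v_\mu$ of $\tilde J_\mu$ with $\tilde J_\mu(v_\mu)\le\tilde c_\mu$, and the strong maximum principle applied to the semilinear equation for $v_\mu$, together with $\tilde h(t)=0$ for $t\le 0$, gives $v_\mu>0$.

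The decisive last step is to remove the truncation. From $\tilde J_\mu(v_\mu)\le C\mu^{-2/(p-2)}$ and $\langle\tilde J_\mu'(v_\mu),v_\mu\rangle=0$ I extract $\|v_\mu\|_{H^1}\le C\mu^{-1/(p-2)}$, hence $\|v_\mu\|_{L^{2^*}}\le C\mu^{-1/(p-2)}$ by Sobolev. A Brezis--Kato bootstrap followed by a Moser iteration on the semilinear equation for $v_\mu$, whose nonlinearity is dominated by $C(v+v^{q-1})$, yields an estimate of the form $\|v_\mu\|_\infty\le C\|v_\mu\|_{L^{2^*}}^{\theta}$ with an explicit $\theta=\theta(N,p,q)$; combining these two bounds, the structural inequality \eqref{eq1.4} is precisely what makes the resulting $\mu$-exponent in $\|v_\mu\|_\infty$ negative, forcing $\|v_\mu\|_\infty\to 0$ as $\mu\to\infty$. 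For $\mu\ge\mu_0$ we therefore have $\|f(v_\mu)\|_\infty\le\|v_\mu\|_\infty\le a$, so $u_\mu=f(v_\mu)$ is a positive solution of \eqref{eq1.3}. The main obstacle is precisely this final balancing act: quantifying how the $\mu$-smallness of the mountain-pass level transfers to an $L^\infty$-smallness of the solution through a Moser-type iteration, the arithmetic of which is captured by \eqref{eq1.4}.
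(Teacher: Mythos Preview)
Your overall architecture matches the paper's: truncate $h$ near the origin, apply Theorem~\ref{thm1} to the modified problem, then show $|u_\mu|_\infty\to 0$ so the truncation is inactive for large~$\mu$. The gap is in the quantitative step, where you have suppressed the $\mu$-dependence at exactly the two places where it matters. First, the equation for $v_\mu$ is $-\Delta v=\mu\,\tilde h(f(v))f'(v)-V(x)f(v)f'(v)$, so the nonlinearity fed into the Moser iteration is bounded by $C\mu\,v^{q-1}$, not $C\,v^{q-1}$; the correct output (the paper's Lemma~\ref{lem11}) is
\[
|v_\mu|_\infty\le C\bigl(\mu\,\|v_\mu\|^{\,q-2}\bigr)^{\frac{1}{2^*-q}}\|v_\mu\|,
\]
with an explicit power of $\mu$ in front. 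Second, the bound $\|v_\mu\|_{H^1}\le C\mu^{-1/(p-2)}$ does not follow from $\tilde J_\mu(v_\mu)\le d_\mu$ together with $\langle\tilde J_\mu'(v_\mu),v_\mu\rangle=0$. What one actually gets (paper's Lemma~\ref{lem12}) uses the Pohozaev identity with $(V'_2)$ to control $\int|\nabla v_\mu|^2\le N d_\mu$, but the $L^2$-part requires testing the equation against $v_\mu$, and there the right-hand side again carries a factor~$\mu$; after optimising one obtains
\[
\|v_\mu\|^2\le C\Bigl(d_\mu+d_\mu^{2^*/2}+\mu^{\frac{2^*-2}{q-2}}d_\mu^{2^*/2}\Bigr).
\]

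These two $\mu$-corrections are not cosmetic. With your stated bounds one would get $|v_\mu|_\infty\le C\mu^{-\theta/(p-2)}$ for some $\theta>0$, which tends to~$0$ for \emph{every} admissible pair $(p,q)$ and would make hypothesis~\eqref{eq1.4} superfluous --- already a sign that something has been lost. The actual balance of the two correct estimates, together with $d_\mu\le C\mu^{-2/(p-2)}$, produces
\[
|v_\mu|_\infty\le C\,\mu^{\frac{(p-q)[2^*(2^*-2)-2(p-2)]-2(2^*-p)(p-2)}{2(p-2)(q-2)(2^*-q)}},
\]
and the negativity of this exponent is \emph{equivalent} to \eqref{eq1.4}. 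So your outline is right, but the ``balancing act'' you flag at the end must be carried out with the $\mu$-factors kept explicit throughout; otherwise condition~\eqref{eq1.4} never appears.
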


\noindent{\bf Remark 1.2.} {(1) From $(h'_1)$ and $(h'_2)$, it is easy to see that $q\leq p$.
Moreover, assumption \eqref{eq1.4} holds if $q$ is close to $p$.\\
(2) In Theorem \ref{thm3}, there is no condition assumed on the
nonlinearity near infinity. Similar assumptions were used in
\cite{chen-li,CW} for the semilinear  elliptic problems on a bounded
domain. }

\vskip 3mm

To prove Theorems \ref{thm1} and \ref{thm2}, we are faced with several difficulties. On one
hand, due to the presence of $\Delta(u^2)u$ and growth condition on the nonlinearity, the
natural energy functional is not well defined in $H^1(\R^N)$. Thus we can not apply variational
methods directly. To overcome this difficulty, we employ an argument developed in \cite{CJ,LWW}
and make a change of variables to reformulate the problem into a semilinear one.

On the other hand, it will be shown later that the functional $I$ associated to equivalent
semilinear problem possesses the Mountain Pass geometry (see Lemma \ref{lem2}) and so there
exists a Palais-Smale sequence. However, the boundedness of Palais-Smale sequence seems hard
to verify. Our strategy is applying Jeanjean's monotonicity trick \cite{J}, which can be traced
back to \cite{S}, to find a bounded Palais-Smale sequence for $I$. More precisely, we will
take the following three steps. Firstly, we define a family of functionals $I_\lb,\ \lb\in[\frac{1}{2},1]$,
such that $I_1=I$.  By an abstract result in \cite{J}, for almost every $\lb\in[\frac{1}{2},1]$,
there is a bounded Palais-Smale sequence for $I_\lb$. Secondly, restricting in the subspace
of radially symmetric functions if $V$ is a radial potential or using a version of global
compactness lemma due to Adachi and Watanabe \cite{AW} when $V$ is a well potential, we obtain
a nontrivial critical point $v_\lb$ of $I_\lb$ for almost every $\lb\in[\frac{1}{2},1]$. Finally,
choosing $\lb_n\to 1$, we have a sequence of $\{v_{\lb_n}\}$ being the critical points of
$I_{\lb_n}$. Then, with the aid of a corresponding Pohozaev type identity and condition $(V_2)$,
we prove that $\{v_{\lb_n}\}$ is a bounded Palais-Smale sequence for $I$.

The proof of Theorem \ref{thm3} is based on the conclusion of Theorem \ref{thm1} and a priori
estimate. Firstly, we modify $h$ to a new nonlinearity $\wt{h}$ which satisfies $(h_1)-(h_4)$.
In view of Theorem \ref{thm1}, the modified problem has a positive solution. Secondly, it will
be shown that the solution obtained converges to zero in $L^\infty$-norm as $\mu\to\infty$.
Thus, for $\mu$ large, it is a solution of original problem. This method is borrowed from Costa
and Wang \cite{CW}. But here we have to analyse carefully the effect of the term $\Delta(u^2)u$
and the transformation $f$.

The paper is organized as follows. In Section 2, following the method in \cite{CJ,LWW}, we
reformulate (\ref{eq1.2}) into a semilinear problem. We give the proofs of Theorems \ref{thm1}$-$\ref{thm3}
in Sections 3$-$5 respectively. The last section is devoted to a generalized result.

\textbf{Notations:} In the sequel, $C$ and $C_i$ represent variant positive constants. The
standard norms of $L^p(\R^N)\ (p\geq 1)$ and $H^1(\R^N)$ are denoted by $|\cdot|_p$ and $\|\cdot\|$
respectively. Set $H_r^1(\R^N)=\left\{u\in H^1(\R^N)\ |\ u\ \hbox{is radially symmetric}\right\}.$

\section{Equivalent variational problem}

\hspace*{\parindent} The natural energy functional associated to \eqref{eq1.2} is
$$J(u)=\df{1}{2}\di_{\R^N}(|\nabla u|^2+V(x)u^2)\ dx + \di_{\R^N}u^2|\nabla u|^2\ dx-\di_{\R^N}H(u)\ dx,$$
which is not well defined for all $u\in H^1(\R^N)$. To apply variational methods, we employ
an argument developed in \cite{CJ,LWW} and make a change of variables.

Let $f$ be defined by
$$ f'(t)=\df{1}{\sqrt{1+2f^2(t)}} \quad \hbox{and}\quad f(0)=0$$
on $[0,+\infty)$ and by $f(t)=-f(-t)$ on $(-\infty, 0]$. Then $f$ is uniquely defined, smooth
and invertible. In next lemma, we summarize some properties of $f$ which have been proved in
\cite{CJ,LWW}.

\begin{Lem} \label{lem1}
(1) $|f'(t)|\leq 1$ for all $t\in\R$;\\
(2) $|f(t)|\leq |t|$ for all $t\in\R$;\\
(3) $|f(t)|\leq 2^{\frac{1}{4}}|t|^{\frac{1}{2}}$ for all $t\in\R$;\\
(4) $\lim\limits_{t\to 0}\frac{f(t)}{t}=1$;\\
(5) $\lim\limits_{t\to +\infty}\frac{f(t)}{\sqrt{t}}=2^{\frac{1}{4}}$;\\
(6) $\frac{1}{2} f(t)\leq tf'(t)\leq f(t)$ for all $t\in\R^+$;\\
(7) $\frac{1}{2} f^2(t)\leq f(t)f'(t)t\leq f^2(t)$ for all $t\in \R$;\\
(8) there exists a positive constant $C$ such that
\begin{equation*}
|f(t)|\geq\left\{ \begin{aligned}
& C|t|, \quad\ \ \mbox{if}\ |t|\leq 1, \\
& C|t|^{\frac{1}{2}}, \quad\mbox{if}\ |t|\geq 1.
\end{aligned} \right.
\end{equation*}
\end{Lem}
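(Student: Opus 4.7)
The plan is to treat $f$ via its explicit inverse $F:=f^{-1}$. Writing the ODE as $f'(t)\sqrt{1+2f^2(t)}=1$ and using $F(f(t))=t$, we immediately obtain the integral representation
$$F(s)=\int_0^s\sqrt{1+2u^2}\,du,\qquad s\in\R,$$
and the derivative formula $f'(t)=(F'(f(t)))^{-1}=1/\sqrt{1+2f^2(t)}$. Items (1) and (4) are then immediate: the denominator is $\ge 1$, and $f'(0)=1$ combined with $f(0)=0$ gives $\lim_{t\to 0}f(t)/t=f'(0)=1$. Item (2) follows from (1) by the mean value theorem, using $f(0)=0$.

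For (3) and (5) I would sandwich the integrand. From $\sqrt{2}\,|u|\le\sqrt{1+2u^2}\le 1+\sqrt{2}\,|u|$, writing $s=f(t)\ge 0$ for $t\ge 0$ and integrating yields
$$\tfrac{1}{\sqrt{2}}\,f^2(t)\ \le\ t\ =\ F(f(t))\ \le\ f(t)+\tfrac{1}{\sqrt{2}}\,f^2(t),$$
which gives (3) immediately, and dividing by $f^2(t)$ and sending $t\to\infty$ (so that $f(t)\to\infty$, which one verifies by monotonicity and the lower bound) produces $f(t)/\sqrt{t}\to 2^{1/4}$, i.e.\ (5).

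Item (6) is the step I expect to require the most care, and it is most cleanly handled on the inverse side. Setting $s=f(t)\ge 0$, the inequalities $\tfrac12 f(t)\le tf'(t)\le f(t)$ are equivalent, after multiplying by $\sqrt{1+2s^2}$, to
$$\tfrac{1}{2}\,s\sqrt{1+2s^2}\ \le\ F(s)\ \le\ s\sqrt{1+2s^2}.$$
I would define $\phi(s)=F(s)-\tfrac12 s\sqrt{1+2s^2}$ and $\psi(s)=s\sqrt{1+2s^2}-F(s)$, both vanishing at $0$, and compute
$$\phi'(s)=\frac{1}{2\sqrt{1+2s^2}}>0,\qquad \psi'(s)=\frac{2s^2}{\sqrt{1+2s^2}}\ge 0,$$
so $\phi,\psi\ge 0$ on $[0,\infty)$, yielding (6). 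Item (7) is then obtained by multiplying (6) by the nonnegative quantity $f(t)$ on $\R^+$ and using the oddness of $f$ (so all terms in (7) are even).

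Finally, (8) splits at $|t|=1$. For $|t|\le 1$, oddness reduces to $t\in[0,1]$, where $f''(t)=-2f(t)(f'(t))^2/(1+2f^2(t))\le 0$ so $f'$ is decreasing, hence $f'(t)\ge f'(1)=:C_1>0$ on $[0,1]$ and integration gives $f(t)\ge C_1 t$. For $|t|\ge 1$, the function $t\mapsto f(t)/\sqrt{t}$ is continuous and positive on $[1,\infty)$ and tends to $2^{1/4}$ by (5), so it has a positive infimum $C_2>0$; taking $C=\min\{C_1,C_2\}$ completes the proof.
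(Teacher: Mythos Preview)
Your proof is correct and complete. Each step checks out: the inverse representation $F(s)=\int_0^s\sqrt{1+2u^2}\,du$ is the right device, the sandwich $\sqrt{2}\,|u|\le\sqrt{1+2u^2}\le 1+\sqrt{2}\,|u|$ cleanly yields (3) and (5), and the derivative computations for $\phi$ and $\psi$ in (6) are accurate (one may simplify your expression for $f''$ to $-2f(t)(f'(t))^4$, but the sign is what matters and it is correct).

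The paper itself does not supply a proof of this lemma; it simply records the properties and cites \cite{CJ,LWW}. So there is no ``paper's own proof'' to compare against here. Your argument is a clean, self-contained verification that could stand in for those citations; the inverse-function viewpoint you adopt is essentially the standard one used in that literature.
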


As a consequence of Lemma \ref{lem1}, we have

\begin{Cor}\label{cor1}
Let $\alpha\in[1,2)$, then $|f(t)|\leq 2^{\frac{1}{4}}|t|^{\frac{\alpha}{2}}$ for all $t\in\R$.
\end{Cor}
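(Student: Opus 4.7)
My plan is to obtain this estimate by direct interpolation between the two bounds already established in Lemma \ref{lem1}, namely $(2)$ which gives $|f(t)|\leq |t|$ (essentially the exponent $\alpha=2$ case) and $(3)$ which gives $|f(t)|\leq 2^{\frac{1}{4}}|t|^{\frac{1}{2}}$ (the $\alpha=1$ case). Since $\alpha\in[1,2)$ implies $\alpha/2\in[\frac{1}{2},1)$, the desired estimate should lie between these two endpoint inequalities, and the natural strategy is a case split on whether $|t|$ is small or large so that the correct endpoint bound can be amplified using a monotonicity argument.

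More precisely, I would distinguish the two cases $|t|\leq 1$ and $|t|\geq 1$. In the first case, since $\alpha/2\leq 1$ we have $|t|^{\alpha/2}\geq |t|$, hence by Lemma \ref{lem1}(2),
$$|f(t)|\leq |t| \leq |t|^{\frac{\alpha}{2}}\leq 2^{\frac{1}{4}}|t|^{\frac{\alpha}{2}}.$$
In the second case, since $\alpha/2\geq 1/2$ we have $|t|^{\alpha/2}\geq |t|^{1/2}$, so Lemma \ref{lem1}(3) gives
$$|f(t)|\leq 2^{\frac{1}{4}}|t|^{\frac{1}{2}}\leq 2^{\frac{1}{4}}|t|^{\frac{\alpha}{2}}.$$
Combining these two cases (and using that $f$ is odd to reduce to $t\geq 0$ if desired) yields the claim.

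An alternative, more uniform way to present the same idea is to write $\alpha/2=\theta\cdot 1+(1-\theta)\cdot\frac{1}{2}$ with $\theta=\alpha-1\in[0,1]$, and then multiply the $\theta$-th power of Lemma \ref{lem1}(2) with the $(1-\theta)$-th power of Lemma \ref{lem1}(3) to obtain $|f(t)|\leq 2^{(1-\theta)/4}|t|^{\alpha/2}$; since $(1-\theta)/4\leq 1/4$, the constant $2^{\frac{1}{4}}$ is enough. There is no real obstacle here: the result is a purely algebraic consequence of the two bounds already recorded in Lemma \ref{lem1}, and either approach produces a proof of a few lines.
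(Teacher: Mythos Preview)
Your proof is correct and is precisely the natural argument the paper has in mind: the corollary is stated without proof, merely as ``a consequence of Lemma~\ref{lem1}'', and your case split on $|t|\le 1$ versus $|t|\ge 1$ combining items~(2) and~(3) is exactly the two-line derivation this phrasing suggests. The interpolation variant you mention is also fine and yields the same (in fact slightly sharper) constant.
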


Set $v=f^{-1}(u)$, then we obtain
$$I(v):=J(f(v))=\df{1}{2}\di_{\R^N} \left(|\nabla v|^2+V(x) f^2(v) \right) \ dx -\di_{\R^N} H(f(v))\ dx,$$
which is well defined in $H^1(\R^N)$ and belongs to $C^1$ under our assumptions. It is well
known that critical points of $I$ are weak solutions of semilinear elliptic equation
$$-\Delta v=h(f(v))f'(v)-V(x)f(v)f'(v),\quad\hbox{in}\ \R^N.$$
Moreover, if $v\in H^1(\R^N)\cap C^2(\R^N)$ is a critical point of $I$, then $u=f(v)$ is a
classical solution of problem \eqref{eq1.2}.

\section{Proof of Theorem \ref{thm1}}

\hspace*{\parindent}
As stated in the introduction, since we do not assume $h$ is 4-superlinear at infinity, it
seems hard to prove the boundedness of Palais-Smale sequence. We will use the following abstract
result \cite{J} to construct a special Palais-Smale sequence.

\begin{Prop} \label{prop1}
Let $X$ be a Banach space equipped with a norm $\|\cdot\|_X$ and let $\cj\subset\R^+$ be an
interval. We consider a family $\{\Phi_\lb\}_{\lb\in\cj}$ of $C^1$-functionals on $X$ of the
form
$$\Phi_\lb(v)=A(v)-\lb B(v),\quad\hbox{for all}\ \lb\in\cj,$$
where $B(v)\geq 0$ for all $v\in X$ and either $A(v)\to+\infty$ or $B(v)\to+\infty$ as $\|v\|_X\to\infty$.
Assume that there exist two points $v_1,\ v_2\in X$ such that
$$c_\lb=\inf\limits_{\gamma\in\Gamma}\max\limits_{t\in [0,1]}\Phi_\lb(\gamma(t))
>\max\{\Phi_\lb(v_1),\Phi_\lb(v_2)\},\quad\hbox{for all}\ \lb\in\cj,$$
where $\Gamma=\{\gamma\in C([0,1],X)\ |\ \gamma(0)=v_1,\ \gamma(1)=v_2\}.$ Then, for almost
every $\lb\in\cj$, there exists a sequence $\{v_n(\lb)\}\subset X$ such that\\
(1)\ $\{v_n(\lb)\}$ is bounded in $X$,\\
(2)\ $\Phi_\lb(v_n(\lb))\to c_\lb$,\\
(3)\ $\Phi'_\lb(v_n(\lb))\to 0$ in $X^*$, where $X^*$ is the dual space of $X$.\\
Furthermore, the map $\lb\mapsto c_\lb$ is continuous from the left.
\end{Prop}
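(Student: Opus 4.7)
The plan is to implement Jeanjean's monotonicity trick, which exploits the fact that a monotone real function is differentiable almost everywhere.

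First, I would show that $\lb \mapsto c_\lb$ is non-increasing on $\cj$. Because $B(v)\geq 0$, for each fixed $v$ the map $\lb \mapsto \Phi_\lb(v)=A(v)-\lb B(v)$ is non-increasing; taking the max over a path $\gamma$ and then the infimum over $\Gamma$ preserves this monotonicity. By the classical Lebesgue theorem on monotone functions, $c'_\lb$ then exists (and is finite and non-positive) at almost every $\lb\in\cj$.

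Second, at any such point of differentiability I would extract a bounded almost-optimal sequence. Fixing such a $\lb$ and a sequence $\lb_n \nearrow \lb$, I would choose paths $\gamma_n\in\Gamma$ with $\max_t \Phi_{\lb_n}(\gamma_n(t)) \leq c_{\lb_n}+(\lb-\lb_n)$ and pick $t_n$ realizing $\max_t\Phi_\lb(\gamma_n(t))$, which is automatically $\geq c_\lb$. The identity $\Phi_\lb=\Phi_{\lb_n}-(\lb-\lb_n)B$ applied at $\gamma_n(t_n)$ gives
$$B(\gamma_n(t_n))\leq \df{c_{\lb_n}-c_\lb}{\lb-\lb_n}+1,$$
which is bounded in $n$ by differentiability of $c_\cdot$ at $\lb$. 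Since $A(\gamma_n(t_n))=\Phi_\lb(\gamma_n(t_n))+\lb B(\gamma_n(t_n))$ is also bounded, the coercivity assumption (either $A$ or $B$ blows up with $\|v\|_X$) forces $\{\gamma_n(t_n)\}$ to lie in a bounded subset of $X$, with $\Phi_\lb(\gamma_n(t_n))\to c_\lb$.

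Third, I would upgrade this bounded almost-optimal sequence to a bounded Palais-Smale sequence for $\Phi_\lb$ at level $c_\lb$. Arguing by contradiction: if no such sequence existed, then in a sufficiently large ball $\|\Phi'_\lb\|_{X^*}$ would be bounded away from $0$ on a neighborhood of the level $c_\lb$, and a quantitative deformation lemma applied to a localized pseudo-gradient vector field supported on that ball would deform each $\gamma_n$ into a path with strictly smaller maximum level, contradicting the minimax characterization of $c_\lb$. Finally, left-continuity of $\lb\mapsto c_\lb$ follows from monotonicity (giving $\lim_{\mu\to\lb^-}c_\mu\geq c_\lb$) together with the reverse inequality obtained by fixing an $\va$-optimal path $\gamma$ for $c_\lb$ and using uniform continuity of $\mu\mapsto \Phi_\mu(\gamma(t))$ on the compact image of $\gamma$.

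The main obstacle will be the deformation step: the standard mountain-pass deformation lemma produces PS sequences that are not a priori bounded, so one must carefully localize the pseudo-gradient flow to a ball large enough to contain the almost-optimizing sequence $\{\gamma_n(t_n)\}$ and rely on the coercivity hypothesis to guarantee that the flow cannot escape this ball while still decreasing the energy.
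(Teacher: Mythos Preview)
The paper does not supply its own proof of this proposition; it is quoted as an abstract result from Jeanjean \cite{J} and used as a black box. Your sketch is precisely the outline of Jeanjean's original argument, and the strategy is correct.

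One refinement worth flagging concerns the interface between your steps 2 and 3. For the localized deformation in step 3 to succeed, it is not enough that the single maximizer $\gamma_n(t_n)$ lies in a fixed ball; you need the entire ``high'' portion of the path, namely all $t$ with $\Phi_\lb(\gamma_n(t))\geq c_\lb-\delta$ for a suitably small $\delta>0$, to lie in that ball. Otherwise the flow, being supported on the ball, may fail to lower the maximum of $\Phi_\lb$ along the deformed path. Fortunately the same computation you wrote down yields this uniformly: for any such $t$,
$$B(\gamma_n(t))=\df{\Phi_{\lb_n}(\gamma_n(t))-\Phi_\lb(\gamma_n(t))}{\lb-\lb_n}\leq \df{c_{\lb_n}-c_\lb}{\lb-\lb_n}+1+\df{\delta}{\lb-\lb_n},$$
and choosing $\delta=\lb-\lb_n$ keeps the right-hand side bounded by differentiability of $c_\cdot$ at $\lb$; the bound on $A(\gamma_n(t))$ then follows as before. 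With this strengthening in hand, the localized pseudo-gradient deformation you describe does go through, and the remainder of your outline (including the left-continuity argument) is correct.
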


To apply Proposition \ref{prop1}, we set $X=H_r^1(\R^N)$ and introduce a family of functionals
$$I_\lb(v)=\df{1}{2}\di_{\R^N} \left(|\nabla v|^2+V(x) f^2(v) \right)\ dx -\lb\di_{\R^N} H(f(v))\ dx,\quad v\in X,$$
where $\lb\in[\frac{1}{2},1]$.

Define $A(v)=\frac{1}{2}\int_{\R^N} \left(|\nabla v|^2+V(x) f^2(v) \right)\ dx$ and
$B(v)=\int_{\R^N} H(f(v))\ dx.$ Then $I_\lb(v)=A(v)-\lb B(v)$. Next lemma ensures that $I_\lb$
satisfies all assumptions of Proposition \ref{prop1}.

\begin{Lem} \label{lem2}
Assume that $(V_1)$ and $(h_1)-(h_4)$ hold. Then\\
(1) $B(v)\geq 0$ for all $v\in X$;\\
(2) $A(v)\to \infty$ as $\|v\|\to\infty$;\\
(3) there exists $v_0\in X$, independent of $\lb$, such that $I_\lb(v_0)< 0$ for all $\lb\in[\frac{1}{2},1]$;\\
(4) for all $\lb\in[\frac{1}{2},1]$, it holds
$$c_\lb=\inf_{\gamma\in\Gamma}\max_{t\in [0,1]}I_\lb(\gamma(t))>\max\{I_\lb(0),I_\lb(v_0)\},$$
\quad\ \ where $\Gamma=\{\gamma\in C([0,1],X)\ |\ \gamma(0)=0,\ \gamma(1)=v_0\}.$
\end{Lem}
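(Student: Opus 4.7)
The plan is to verify each of the four items, with (2) and (4) being the parts that really use the peculiar features of the transformation $f$. Throughout I will exploit the pointwise estimates collected in Lemma \ref{lem1}, in particular items (2), (3), (5), (8), together with the Sobolev embedding $H^1(\R^N)\hookrightarrow L^{2^*}(\R^N)$.

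Item (1) is immediate: by $(h_1)$, $h\equiv 0$ on $(-\infty,0]$, so $H(t)=0$ for $t\le 0$, while $(h_4)$ gives $H(t)\ge 0$ for $t\ge 0$. Hence $H(f(v))\ge 0$ everywhere and $B(v)\ge 0$. For item (2), I would argue by contradiction: take $\{v_n\}\subset X$ with $\|v_n\|\to\infty$ and suppose $A(v_n)$ stays bounded. Then $\|\nabla v_n\|_2$ is bounded, so by Sobolev $\int|v_n|^{2^*}dx\le C$; splitting $\R^N=\{|v_n|\le 1\}\cup\{|v_n|>1\}$ one has $\int_{|v_n|>1}v_n^2\le\int|v_n|^{2^*}\le C$, so $\|v_n\|_2\to\infty$ forces $\int_{|v_n|\le 1}v_n^2\to\infty$. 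Lemma \ref{lem1}(8) then yields $\int f^2(v_n)\ge C^2\int_{|v_n|\le 1}v_n^2\to\infty$, contradicting boundedness of $A(v_n)$ via $(V_1)$.

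For (3), I would construct $v_0$ by a dilation argument. Fix a radial cutoff $\phi\in C_c^\infty(\R^N)$ with $\phi\equiv M$ on $B_1$ and $\mathrm{supp}\,\phi\subset B_2$, and set $w_\tau(x)=\phi(x/\tau)$. Scaling gives
\[
A(w_\tau)\le\tfrac12\tau^{N-2}\int|\nabla\phi|^2dx+\tfrac{V_1}{2}\tau^N|B_2|f^2(M),\qquad B(w_\tau)\ge\tau^N|B_1|H(f(M)).
\]
Since $(h_3)$ together with Lemma \ref{lem1}(5) implies $H(f(M))/f^2(M)\to\infty$ as $M\to\infty$, I pick $M$ so large that $\tfrac{1}{2}|B_1|H(f(M))>2V_1|B_2|f^2(M)$, then $\tau$ large enough that $\tau^{N-2}$ is dwarfed by $\tau^N$; the resulting $v_0:=w_\tau$ satisfies $I_\lambda(v_0)\le A(v_0)-\tfrac12 B(v_0)<0$ uniformly in $\lambda\in[\tfrac12,1]$.

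Item (4) is the main obstacle, because one must produce a positive uniform barrier near the origin even though the quadratic part $A$ does not obviously dominate $\|v\|^2$. I would combine $(h_1)$–$(h_2)$ into the bound $H(t)\le\varepsilon t^2+C_\varepsilon t^q$ for $t\ge 0$. For the nonlinear part, Lemma \ref{lem1}(2)--(3) give $|f(v)|^q\le C|v|^r$ with $r=\min(q,2^*)\in(2,2^*]$ when $q\in(2,2^*]$ and $r=2^*$ (using $|f(v)|\le 2^{1/4}|v|^{1/2}$) when $q\in(2^*,2\cdot 2^*)$, so by Sobolev $B(v)\le\varepsilon\|v\|^2+C_\varepsilon\|v\|^r$. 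For $A$, the same $\{|v|\le 1\}$/$\{|v|>1\}$ split used in (2) together with Lemma \ref{lem1}(8) and Sobolev yields $\int f^2(v)\ge C^2\|v\|_2^2-C'\|\nabla v\|_2^{2^*}$, whence $A(v)\ge c_1\|v\|^2-c_2\|v\|^{2^*}$. Choosing $\varepsilon$ small and $\|v\|=\rho$ sufficiently small (and, if necessary, enlarging $\tau$ in (3) so that $\|v_0\|>\rho$), I obtain $I_\lambda(v)\ge\tfrac{c_1}{2}\rho^2>0$ on the sphere $\|v\|=\rho$ uniformly in $\lambda\in[\tfrac12,1]$. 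Since every path $\gamma\in\Gamma$ must cross this sphere, $c_\lambda\ge\tfrac{c_1}{2}\rho^2>0=\max\{I_\lambda(0),I_\lambda(v_0)\}$, completing the proof.
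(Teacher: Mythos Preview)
Your argument is correct, and the overall strategy matches the paper's: use $(h_4)$ for (1); split $\{|v|\le 1\}$ versus $\{|v|>1\}$ together with Lemma~\ref{lem1}(8) and Sobolev for (2); produce $v_0$ by a dilation argument for (3); and establish a uniform lower bound $I_\lambda(v)\ge c\|v\|^2-C\|v\|^{2^*}$ near the origin for (4).

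There are two stylistic differences worth noting. For (3), the paper works with the \emph{untransformed} functional $J_\lambda$ and the scaling $u\mapsto tu(\cdot/t)$, then pulls back via $v_0=f^{-1}(u_0)$; your plateau construction $w_\tau=\phi(\cdot/\tau)$ in the $v$-variable is an equally valid alternative. For (4), the paper packages the potential and nonlinear terms into a single auxiliary function $\widehat H(t)=-\tfrac{V_0}{2}f^2(t)+H(f(t))$, observes $\widehat H(t)/t^2\to -V_0/2$ as $t\to 0$ and $\widehat H(t)/|t|^{2^*}\to 0$ as $|t|\to\infty$, and deduces $\widehat H(t)\le -\tfrac{V_0}{4}t^2+C|t|^{2^*}$ directly; this avoids your separate lower bound on $A$ and yields $I_\lambda(v)\ge \min\{\tfrac12,\tfrac{V_0}{4}\}\|v\|^2-C\|v\|^{2^*}$ in one stroke. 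Your two-step route (lower bound on $A$, upper bound on $B$) reaches the same conclusion. One small remark on your bound $|f(v)|^q\le C|v|^{2^*}$ for $q\in(2^*,2\cdot 2^*)$: this is correct, but it really uses \emph{both} Lemma~\ref{lem1}(2) (on $\{|v|\le 1\}$, where $|v|^q\le |v|^{2^*}$) and Lemma~\ref{lem1}(3) (on $\{|v|>1\}$, where $|v|^{q/2}\le |v|^{2^*}$); the parenthetical mentioning only (3) is a slight understatement.
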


\begin{proof}
(1) is a direct consequence of $(h_4)$. Now we prove (2). By Lemma \ref{lem1}, we deduce
\begin{equation*}
\begin{array}{rl}
\|v\|^2
&=\di_{\R^N}|\nabla v|^2 \ dx+\di_{\{x | |v(x)|\leq 1\}} v^2 \ dx+\di_{\{x | |v(x)|> 1\}} v^2 \ dx\\[5mm]
&\leq \di_{\R^N}|\nabla v|^2 \ dx+C\di_{\{x | |v(x)|\leq 1\}} f^2(v)\ dx+\di_{\{x | |v(x)|> 1\}} |v|^{2^*}\ dx\\[5mm]
&\leq \di_{\R^N}|\nabla v|^2 \ dx+C_1\di_{\R^N} V(x)f^2(v)\ dx
+C_2\left(\di_{\R^N}|\nabla v|^2\ dx\right)^{\frac{2^*}{2}}\\[5mm]
&\leq C_3 \left(A(v)+A(v)^{\frac{2^*}{2}}\right),
\end{array}
\end{equation*}
which implies the coercivity of $A$.

In order to prove (3), we set
$$J_\lb(u)=\df{1}{2}\di_{\R^N}(|\nabla u|^2+V(x)u^2)\ dx + \di_{\R^N}u^2|\nabla u|^2\ dx-\lb \di_{\R^N} H(u)\ dx.$$
Let us fix some nonnegative radially symmetric function $u\in C_0^\infty(\R^N)\setminus\{0\}$.
Then, for $t>0$, we have
\begin{equation*}
\begin{array}{rl}
J_{1/2}(tu(x/t)) & = \df{t^N}{2}\di_{\R^N} |\nabla u|^2 \ dx +\df{t^{N+2}}{2}\di_{\R^N} V(tx) u^2 \ dx\\[5mm]
&\qquad\qquad\qquad\qquad\ +\ t^{N+2}\di_{\R^N} u^2 |\nabla u|^2 \ dx-\df{t^N}{2} \di_{\R^N} H(tu) \ dx\\[5mm]
&\leq \df{t^{N+2}}{2}\bigg[\df{1}{t^2}\di_{\R^N} |\nabla u|^2 \ dx+ \di_{\R^N} V_1 u^2 \ dx\\[5mm]
&\qquad\qquad\qquad\qquad\qquad\ \ +\ 2 \di_{\R^N} u^2 |\nabla u|^2 \ dx-\di_{\R^N} \df{H(tu)}{t^2} \ dx\bigg].
\end{array}
\end{equation*}
By assumption $(h_3)$, it is easy to see that $J_{1/2}(tu(x/t))<0$ for $t$ large. Thus there
exists $v_0=f^{-1}(u_0)\in X$ (independent of $\lb\in[\frac{1}{2},1]$) such that
$I_\lb(v_0)=J_\lb(u_0)\leq J_{1/2}(u_0)<0$ for all $\lb\in[\frac{1}{2},1]$.

It remains to prove (4). Define $\widehat{H}(t)=-\frac{V_0}{2}f^2(t)+H(f(t))$. Using $(h_1),\ (h_2)$
and Lemma \ref{lem1}, we obtain
$$\lim\limits_{t\to 0}\df{\widehat{H}(t)}{t^2}=-\df{V_0}{2}\quad\hbox{and}\quad \lim\limits_{t\to\infty}\df{\widehat{H}(t)}{|t|^{2^*}}=0.$$
Thus there exists $C>0$ such that
$$\widehat{H}(t)\leq -\df{V_0}{4} t^2+ C |t|^{2^*},\quad\hbox{for all}\ t\in\R.$$
It follows that
\begin{equation*}
\begin{array}{rl}
I_\lb(v) & \geq \df{1}{2}\di_{\R^N} |\nabla v|^2\ dx +\df{1}{2}\di_{\R^N} V_0 f^2(v)\ dx
-\di_{\R^N} H(f(v))\ dx\\[5mm]
& \geq \df{1}{2}\di_{\R^N} |\nabla v|^2\ dx + \df{V_0}{4}\di_{\R^N} v^2 \ dx -C \di_{\R^N} |v|^{2^*}\ dx\\[5mm]
& \geq \min\{\frac{1}{2},\frac{V_0}{4}\}\|v\|^2-C\|v\|^{2^*}.
\end{array}
\end{equation*}
From this, we get $c_\lb>0$ and the proof is complete.
\end{proof}

By Lemma \ref{lem2} and Proposition \ref{prop1}, there exists $\cj_1\subset[\frac{1}{2},1]$
with $meas(\cj_1)=0$ such that, for any $\lb\in[\frac{1}{2},1]\setminus\cj_1$, there is a
sequence $\{v_n\}\subset X$ satisfying
$$(i)\ \{v_n\}\ \hbox{is bounded in}\ X,\quad (ii)\ I_\lb(v_n)\to c_\lb,\quad
(iii)\ I'_\lb(v_n)\to 0\ \hbox{in}\ X^*.$$

\begin{Lem} \label{lem3}
Up to a subsequence, $\{v_n\}$ converges to a positive critical point $v_\lb$ of $I_\lb$
with $I_\lb(v_\lb)= c_\lb$.
\end{Lem}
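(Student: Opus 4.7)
The plan is to extract a limit of $\{v_n\}$ via radial compactness, identify it as a nontrivial positive critical point of $I_\lb$, and then upgrade to strong convergence. Since $\{v_n\}\subset X=H_r^1(\R^N)$ is bounded, I would pass to a subsequence so that $v_n\rightharpoonup v_\lb$ weakly in $X$, pointwise a.e., and strongly in $L^s(\R^N)$ for every $s\in(2,2^*)$ by the classical Strauss compact embedding. The growth condition $(h_2)$ together with $|f(t)|\leq|t|\wedge 2^{1/4}|t|^{1/2}$ and $0<f'\leq 1$ from Lemma \ref{lem1} dominate $h(f(v_n))f'(v_n)\varphi$ and $V(x)f(v_n)f'(v_n)\varphi$ uniformly for any $\varphi\in C_c^\infty(\R^N)$, so dominated convergence passes $I'_\lb(v_n)(\varphi)\to 0$ to $I'_\lb(v_\lb)(\varphi)=0$; by density, $v_\lb$ is a critical point of $I_\lb$.

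For non-triviality, if $v_\lb\equiv 0$, a case analysis on $q\in(2,2\cdot 2^*)$ using Lemma \ref{lem1} and strong $L^s$-convergence yields $\di_{\R^N}|f(v_n)|^q\,dx\to 0$. Writing $|h(t)|\leq\varepsilon|t|+C_\varepsilon|t|^{q-1}$ and using $V(x)f(v_n)f'(v_n)v_n\geq\tfrac{V_0}{2}f^2(v_n)$ from Lemma \ref{lem1}(7), I deduce from $I'_\lb(v_n)(v_n)\to 0$ with $\varepsilon=V_0/4$ that
\begin{equation*}
\di_{\R^N}|\nabla v_n|^2\,dx+\df{V_0}{4}\di_{\R^N}f^2(v_n)\,dx\leq C\di_{\R^N}|f(v_n)|^q\,dx+o(1)\to 0,
\end{equation*}
so $A(v_n)\to 0$; combined with $\int H(f(v_n))\,dx\to 0$ via the same split, this contradicts $I_\lb(v_n)\to c_\lb>0$. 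For positivity, test $I'_\lb(v_\lb)$ against $v_\lb^-$: since $f$ is odd, $f(v_\lb)<0$ on $\{v_\lb<0\}$, so $h(f(v_\lb))=0$ there by $(h_1)$, and Lemma \ref{lem1}(7) gives
\begin{equation*}
0=I'_\lb(v_\lb)(v_\lb^-)\leq-\di_{\R^N}|\nabla v_\lb^-|^2\,dx-\df{V_0}{2}\di_{\{v_\lb<0\}}f^2(v_\lb)\,dx,
\end{equation*}
forcing $v_\lb\geq 0$; elliptic regularity and the strong maximum principle upgrade this to $v_\lb>0$ on $\R^N$.

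The key to strong convergence is that $g(t):=f(t)f'(t)$ satisfies $g'(t)=f'(t)^4>0$, so $g$ is strictly increasing and $[g(v_n)-g(v_\lb)](v_n-v_\lb)\geq 0$. Testing $(I'_\lb(v_n)-I'_\lb(v_\lb))(v_n-v_\lb)=o(1)$ and discarding the nonlinear piece by $L^s$-compactness, both $\|\nabla(v_n-v_\lb)\|_2\to 0$ and $\di_{\R^N}V(x)[g(v_n)-g(v_\lb)](v_n-v_\lb)\,dx\to 0$ follow. The mean value theorem gives $g(v_n)-g(v_\lb)=g'(\xi_n)(v_n-v_\lb)$ with $g'(\xi_n)\geq(1+2M^2)^{-2}$ on $\{|v_n|,|v_\lb|\leq M\}$; a tail estimate via uniform $L^{2^*}$ boundedness on the complement then yields $\|v_n-v_\lb\|_2\to 0$, so $v_n\to v_\lb$ in $X$ and $I_\lb(v_\lb)=c_\lb$. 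The main difficulty I anticipate lies precisely in this potential term: since $f^2$ grows only like $|t|$ at infinity (Lemma \ref{lem1}(5)), $L^s$-compactness with $s\in(2,2^*)$ alone does not suffice; it is the explicit monotonicity of $g=ff'$ provided by the structure of the transformation $f$ that makes the splitting and tail estimate run smoothly.
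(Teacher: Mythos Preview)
Your argument is correct but takes a different route from the paper. The paper's key device is to \emph{rewrite} the functional as
\[
I_\lb(v)=\df{1}{2}\di_{\R^N}\bigl(|\nabla v|^2+V(x)v^2\bigr)\,dx-\di_{\R^N}G(x,v)\,dx,\qquad
G(x,t)=\df{1}{2}V(x)t^2-\df{1}{2}V(x)f^2(t)+\lb H(f(t)),
\]
so that the quadratic part becomes the standard $H^1$-form. The point is that $t-f(t)f'(t)=O(t^3)$ near $0$ and is $O(|t|)$ at infinity; after assuming without loss of generality $q\in(4,2\cdot 2^*)$, one gets $|g(x,t)|\leq \va|t|+C(\va)|t|^{(q-2)/2}$ with $(q-2)/2\in(1,2^*-1)$, and strong convergence drops out of a single application of $L^{q/2}$-compactness in $\langle I'_\lb(v_n)-I'_\lb(v_\lb),v_n-v_\lb\rangle$. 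Your approach keeps the term $V(x)f(v)f'(v)$ intact and instead exploits the monotonicity $\bigl(ff'\bigr)'=(f')^4>0$ to make the potential contribution nonnegative in the difference; this forces an extra step---the truncation on $\{|v_n|,|v_\lb|\leq M\}$ plus a Chebyshev tail estimate---to convert $\int V[g(v_n)-g(v_\lb)](v_n-v_\lb)\,dx\to 0$ into $|v_n-v_\lb|_2\to 0$. Both routes exploit special structure of $f$, but the paper's rewriting buys a shorter, more standard endgame, while yours is more direct and makes the role of the identity $(ff')'=(f')^4$ explicit. Your nontriviality and positivity arguments are also fine, though the paper simply notes $I'_\lb(v_\lb)=0$ and defers both to the strong convergence (which gives $I_\lb(v_\lb)=c_\lb>0$ hence $v_\lb\neq 0$) and a ``standard argument'' for positivity.
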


\begin{proof}
Without loss of generality, we can suppose that $q\in (4,2\cdot2^*)$ in condition $(h_2)$.
Since $\{v_n\}\subset X$ is bounded, up to a subsequence, we have
$$v_n\rightharpoonup v_\lb\ \hbox{in}\ X, \quad v_n\to v_\lb\ \hbox{in}\ L^{\frac{q}{2}}(\R^N),
\quad v_n\to v_\lb\ \hbox{a.e. in}\ \R^N,$$
for some $v_\lb\in X$. It is easy to check that $I'_\lb(v_\lb)=0$. Next we prove $v_n\to v_\lb$
in $X$. First of all, setting $G(x,t)=\frac{1}{2}V(x)t^2-\frac{1}{2}V(x)f^2(t)+\lb H(f(t))$,
we can rewrite $I_\lb$ as
$$I_\lb(v)=\df{1}{2}\di_{\R^N}\left(|\nabla v|^2+V(x)v^2\right)\ dx-\di_{\R^N}G(x,v)\ dx.$$
Let $g(x,t)=\frac{d}{dt}G(x,t)$. By $(h_1),\ (h_2)$ and Lemma \ref{lem1}, for any $\va>0$,
there exists $C(\va)>0$ such that
$$|g(x,t)|\leq \va|t|+C(\va)|t|^{\frac{q-2}{2}},\quad\hbox{for all}\ x\in\R^N\ \hbox{and}\ t\in\R.$$
Using this inequality and the fact $v_n\to v_\lb\ \hbox{in}\ L^{\frac{q}{2}}(\R^N)$, we obtain
$$\lim\limits_{n\to\infty}\di_{\R^N}(g(x,v_n)-g(x,v_\lb))(v_n-v_\lb)\ dx=0.$$
Hence
\begin{equation*}
\begin{array}{rl}
o(1)&=\langle I'_\lb(v_n)-I'_\lb(v_\lb), v_n-v_\lb \rangle\\[5mm]
&=\di_{\R^N} \left(|\nabla (v_n-v_\lb)|^2+V(x)(v_n-v_\lb)^2\right)\ dx-\di_{\R^N}\left(g(x,v_n)-g(x,v_\lb)\right)(v_n-v_\lb)\ dx\\[5mm]
&\geq  \min\{1,V_0\}\|v_n-v_\lb\|^2+o(1),
\end{array}
\end{equation*}
which implies $v_n\to v_\lb$ in $X$. Therefore $v_\lb$ is a nontrivial critical point of $I_\lb$
with $I(v_\lb)=c_\lb$. The positivity of $v_\lb$ follows by a standard argument.
\end{proof}

At this point, for almost every $\lb\in[\frac{1}{2},1]$, we obtain a positive critical point
$v_\lb$ of $I_\lb$. In general, it is not known whether it is true for $\lb=1$. However we have

\begin{Lem} \label{lem4}
Under the assumptions of Theorem \ref{thm1}, there exist $\{\lb_n\}\subset[\frac{1}{2},1]$ and
$\{v_n\}\subset X\setminus\{0\}$ such that $\lim\limits_{n\to\infty}\lb_n=1,\ v_n>0,\
I_{\lb_n}(v_n)=c_{\lb_n}\leq c_{1/2}$ and $I'_{\lb_n}(v_n)=0$.
\end{Lem}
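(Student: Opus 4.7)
The plan is to assemble the conclusion directly from what has already been prepared: Proposition \ref{prop1} together with Lemma \ref{lem2} yields a measure-zero exceptional set $\cj_1\subset[\frac{1}{2},1]$ off which a bounded Palais--Smale sequence for $I_\lb$ exists, and Lemma \ref{lem3} then upgrades that sequence to an actual positive critical point $v_\lb$ at level $c_\lb$. Thus, to produce $\{\lb_n\}$ and $\{v_n\}$ with $\lb_n\to 1$, I would simply pick any sequence $\lb_n\in[\frac{1}{2},1]\setminus\cj_1$ with $\lb_n\to 1$ (possible because $\cj_1$ has measure zero, so its complement is dense in $[\frac{1}{2},1]$) and define $v_n:=v_{\lb_n}$.

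The only remaining point is the energy bound $c_{\lb_n}\le c_{1/2}$. For this, the key observation is that $I_\lb(v)=A(v)-\lb B(v)$ with $B(v)\ge 0$ for every $v\in X$ (Lemma \ref{lem2}(1)). Consequently, for any fixed $v\in X$ and any $\tfrac12\le\lb'\le\lb\le 1$,
\begin{equation*}
I_\lb(v)=A(v)-\lb B(v)\le A(v)-\lb' B(v)=I_{\lb'}(v),
\end{equation*}
so $I_\lb\le I_{\lb'}$ pointwise. I would then take this inequality along each admissible path $\gamma\in\Gamma$ to obtain $\max_{t\in[0,1]}I_\lb(\gamma(t))\le \max_{t\in[0,1]}I_{\lb'}(\gamma(t))$, and take the infimum over $\Gamma$ to conclude that the map $\lb\mapsto c_\lb$ is non-increasing on $[\frac{1}{2},1]$. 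In particular, $c_{\lb_n}\le c_{1/2}$ for all $n$.

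Putting these together, the sequences $\{\lb_n\}\subset[\frac{1}{2},1]\setminus\cj_1$ and $\{v_n\}=\{v_{\lb_n}\}$ satisfy every required property: $\lb_n\to 1$, $v_n>0$ by Lemma \ref{lem3}, $I'_{\lb_n}(v_n)=0$ and $I_{\lb_n}(v_n)=c_{\lb_n}$ again by Lemma \ref{lem3}, and the energy bound $c_{\lb_n}\le c_{1/2}$ by monotonicity. No significant obstacle is anticipated; the entire statement is a bookkeeping consequence of the structural fact $B\ge 0$ combined with the almost-everywhere existence result already secured. The genuinely delicate work, namely extracting a bounded subsequence from $\{v_n\}$ and passing to the limit to produce a critical point of $I_1=I$, is deferred to later lemmas where the Pohozaev identity and hypothesis $(V_2)$ will be brought to bear.
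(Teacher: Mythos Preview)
Your proposal is correct and matches the paper's intended argument; in fact the paper states Lemma \ref{lem4} without proof, treating it as an immediate consequence of the preceding results, and your write-up spells out precisely those steps. The monotonicity of $\lb\mapsto c_\lb$ via $B\ge 0$ is exactly the point that justifies the bound $c_{\lb_n}\le c_{1/2}$.
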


Next we show that the sequence $\{v_n\}$ obtained in Lemma \ref{lem4} is bounded. For this
purpose, we shall use the following Pohozaev type identity. Since the proof is standard, we
omit it.

\begin{Lem} \label{lem5}
If $v\in X$ is a critical point of $I_\lb$, then
\begin{equation*}
\begin{array}{rl}
\df{N-2}{2} \di_{\R^N} |\nabla v|^2 \ dx &+\ \df{N}{2} \di_{\R^N} V(x) f^2(v) \ dx\\[5mm]
&+\ \df{1}{2} \di_{\R^N}   \nabla V(x)\cdot x f^2(v) \ dx -\lb N\di_{\R^N} H(f(v))\ dx=0.
\end{array}
\end{equation*}
\end{Lem}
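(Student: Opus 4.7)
The plan is to derive the identity by the classical Pohozaev multiplier technique: multiply the Euler--Lagrange equation associated with $I_\lb$ by $x\cdot\nabla v$ and integrate. Since $v\in X=H_r^1(\R^N)$ is a critical point of $I_\lb$, it satisfies
$$-\Delta v + V(x)f(v)f'(v) = \lb\, h(f(v))f'(v) \quad \text{in } \R^N.$$
A preliminary step I would carry out is to upgrade the regularity of $v$. Using $(h_1)$--$(h_2)$, the growth properties of $f$ from Lemma \ref{lem1}, and standard elliptic bootstrap on the semilinear equation above, one obtains $v\in W^{2,p}_{\rm loc}(\R^N)$ for all $p<\infty$ (and in fact $v\in C^2$), so that $x\cdot\nabla v$ is a legitimate pointwise expression.

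Next I would introduce a smooth cutoff $\xi_R\in C_c^\infty(\R^N)$ with $\xi_R\equiv 1$ on $B_R$, $\xi_R\equiv 0$ outside $B_{2R}$, and $|\nabla \xi_R|\leq C/R$, multiply the equation by $\xi_R(x\cdot\nabla v)$, integrate over $\R^N$, and handle each of the three resulting terms:
\begin{enumerate}
\item[(i)] \textbf{Laplacian term.} The classical Rellich--Pohozaev computation yields
$$\int_{\R^N}(-\Delta v)(x\cdot\nabla v)\,dx = -\df{N-2}{2}\int_{\R^N}|\nabla v|^2\,dx,$$
after integration by parts twice and using the identity $x\cdot\nabla v\,\partial_{ij}v = \tfrac{1}{2}x\cdot\nabla(|\partial_j v|^2)$.
\item[(ii)] \textbf{Potential term.} Observing that $f(v)f'(v)\nabla v = \tfrac{1}{2}\nabla(f^2(v))$, an integration by parts gives
$$\int_{\R^N}V(x)f(v)f'(v)(x\cdot\nabla v)\,dx = -\df{N}{2}\int_{\R^N}V(x)f^2(v)\,dx -\df{1}{2}\int_{\R^N}(\nabla V(x)\cdot x)\,f^2(v)\,dx,$$
using $\mathrm{div}(x\,V(x)) = NV(x) + \nabla V(x)\cdot x$.
\item[(iii)] \textbf{Nonlinearity term.} Since $h(f(v))f'(v)\nabla v = \nabla(H(f(v)))$, integration by parts yields
$$\lb\int_{\R^N}h(f(v))f'(v)(x\cdot\nabla v)\,dx = -\lb N\int_{\R^N}H(f(v))\,dx.$$
\end{enumerate}
Substituting these three into the integrated equation and multiplying by $-1$ gives the claimed identity.

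The main obstacle, and the reason for the cutoff $\xi_R$, is controlling the error produced by $\nabla\xi_R$ and showing that all boundary/remainder terms vanish as $R\to\infty$. For the gradient term one uses $v\in H^1(\R^N)$ together with $|x||\nabla\xi_R|\leq C$ supported on the annulus $R\leq|x|\leq 2R$ where $\int|\nabla v|^2\to 0$; for the potential piece, $(V_1)$ and Lemma \ref{lem1}(2) give $V(x)f^2(v)\in L^1$ with appropriate decay; and for the nonlinearity piece the growth $(h_1)$--$(h_2)$ combined with Lemma \ref{lem1} yields $H(f(v))\in L^1(\R^N)$. Each of these remainder integrals is therefore $o(1)$ as $R\to\infty$. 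Once these tail estimates are collected, the identity drops out cleanly; this is why the paper deems the proof routine and omits it.
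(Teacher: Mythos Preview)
Your proposal is correct and is exactly the standard Pohozaev multiplier argument the paper has in mind when it writes ``Since the proof is standard, we omit it.'' The only point worth flagging is that when you justify the potential remainder you cite $(V_1)$ and Lemma~\ref{lem1}(2) for $V(x)f^2(v)\in L^1$, but the integrability of $(\nabla V(x)\cdot x)\,f^2(v)$ also has to be addressed; this follows from $(V_2)$ together with Corollary~\ref{cor1} (H\"older with exponents $\tfrac{2^*}{2^*-\alpha}$ and $\tfrac{2^*}{\alpha}$), and for the negative part one can appeal to monotone convergence since the remaining terms in the identity are already known to be finite.
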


\begin{Lem} \label{lem6}
The sequence $\{v_n\}$ obtained in Lemma \ref{lem4} is bounded in $X$.
\end{Lem}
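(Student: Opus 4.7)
The plan is to bound $|\nabla v_n|_2$ and $|v_n|_2$ in two stages. For the gradient, I would combine the energy identity $I_{\lambda_n}(v_n)=c_{\lambda_n}$ with the Pohozaev identity of Lemma \ref{lem5}: multiplying the former by $N$ and subtracting the latter eliminates both $\int V(x)f^2(v_n)\,dx$ and $\lambda_n\int H(f(v_n))\,dx$, yielding
\[
|\nabla v_n|_2^2 \;=\; N c_{\lambda_n} + \tfrac{1}{2}\di_{\R^N} \bigl(\nabla V(x)\cdot x\bigr) f^2(v_n)\,dx \;\leq\; N c_{1/2} + \tfrac{1}{2}\di_{\R^N} \bigl(\nabla V(x)\cdot x\bigr)^+ f^2(v_n)\,dx.
\]
By Corollary \ref{cor1} one has $f^2(v_n)\leq\sqrt{2}|v_n|^{\alpha}$; H\"older with conjugate exponents $\tfrac{2^*}{2^*-\alpha}$ and $\tfrac{2^*}{\alpha}$, assumption $(V_2)$, and the Sobolev embedding $|v_n|_{2^*}\leq S|\nabla v_n|_2$ then bound the last integral by $C|\nabla v_n|_2^{\alpha}$. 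Since $\alpha\in[1,2)$, the resulting inequality $|\nabla v_n|_2^2\leq Nc_{1/2}+C|\nabla v_n|_2^{\alpha}$ forces $|\nabla v_n|_2$, and hence $|v_n|_{2^*}$, to be uniformly bounded.

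For the $L^2$ norm, I would test $\langle I'_{\lambda_n}(v_n),v_n\rangle=0$; parts (6)--(7) of Lemma \ref{lem1} give
\[
|\nabla v_n|_2^2 + \tfrac{1}{2}B_n \;\leq\; \lambda_n \di_{\R^N} h(f(v_n)) f(v_n)\,dx,\qquad B_n := \di_{\R^N} V(x) f^2(v_n)\,dx.
\]
As in Lemma \ref{lem3}, I may assume $q\in(4,2\cdot 2^*)$ in $(h_2)$. Combining $(h_1)$ and $(h_2)$ yields $h(t)t\leq \varepsilon t^2 + C_\varepsilon t^q$ for any $\varepsilon>0$ and all $t\geq 0$; together with Lemma \ref{lem1}(3) this bounds the right side by $\varepsilon V_0^{-1}B_n + C_\varepsilon 2^{q/4}|v_n|_{q/2}^{q/2}$. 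Since $q/2\in(2,2^*)$, interpolating between $L^2$ and $L^{2^*}$ and using the bound on $|v_n|_{2^*}$ give $|v_n|_{q/2}^{q/2}\leq C|v_n|_2^{\theta q/2}$, where a direct computation shows $\theta q/2 = N - q(N-2)/4$.

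To close the argument I would use Lemma \ref{lem1}(8) to control $|v_n|_2^2$ by $B_n$: on $\{|v_n|\leq 1\}$ one has $v_n^2\leq C f^2(v_n)\leq CV_0^{-1}V(x)f^2(v_n)$, while on $\{|v_n|>1\}$ one has $v_n^2\leq |v_n|^{2^*}$, controlled by the $L^{2^*}$ bound; hence $|v_n|_2^2\leq C_1 B_n + C_2$. Substituting back and taking $\varepsilon=V_0/4$ produces $\tfrac{1}{4}B_n \leq C(B_n+1)^{\theta q/4}$, and since $q>4$ forces $\theta q/4 = N/2 - q(N-2)/8 < 1$, this self-referential bound yields a uniform estimate on $B_n$ and therefore on $|v_n|_2$, completing the proof. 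The main obstacle is precisely this closing step: in the absence of a $4$-superlinearity assumption on $h$ the $L^{q/2}$ term on the right is not \emph{a priori} absorbable, and only the strictly sublinear exponent $\theta q/4<1$ (which itself requires $q>4$, attainable WLOG) lets the bootstrap close.
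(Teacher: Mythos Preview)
Your argument is correct, and the first stage (the gradient bound via Pohozaev, $(V_2)$, Corollary~\ref{cor1}, and Sobolev) is exactly the paper's own proof. The second stage differs: you bound $h(f)f$ by $\varepsilon f^2 + C_\varepsilon f^q \leq \varepsilon f^2 + C_\varepsilon |v_n|^{q/2}$, interpolate $L^{q/2}$ between $L^2$ and $L^{2^*}$, feed $|v_n|_2^2 \leq C_1 B_n + C_2$ back in, and close via the sublinear exponent $\theta q/4 < 1$ (which indeed requires $q>4$, harmless WLOG). The paper takes a shorter path: rather than stopping at $|v_n|^{q/2}$, it pushes the large-$t$ estimate all the way to the critical exponent, establishing
\[
|h(f(t))f'(t)t| \leq \varepsilon f^2(t) + C(\varepsilon)|t|^{2^*}
\]
directly from $(h_1)$, $(h_2)$, and $q/2<2^*$. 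The $|t|^{2^*}$ term is then controlled by $(\int |\nabla v_n|^2)^{2^*/2}$, already bounded from stage one, so after absorbing the $\varepsilon$-term one reads off the bound on $B_n$ with no interpolation and no self-referential inequality. Your route works and is a nice illustration of why the WLOG $q>4$ is useful, but the paper's observation that one can jump straight to $2^*$ makes the bootstrap unnecessary.
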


\begin{proof}
In view of Lemma \ref{lem2}, it is enough to prove that $\int_{\R^N}\left(|\nabla v_n|^2+V(x) f^2(v_n) \right)\ dx$
is bounded. By $I_{\lb_n}(v_n)\leq c_{1/2}$, Lemma \ref{lem5}, H\"{o}lder inequality and
Sobolev inequality, we have
\begin{equation} \label{eq3.1}
\begin{array}{rl}
\di_{\R^N} |\nabla v_n|^2 \ dx &\leq  \df{1}{2}\di_{\R^N} \nabla V(x)\cdot x f^2(v_n) \ dx+N c_{1/2}\\[5mm]
&\leq \df{1}{2}\ \big| (\nabla V(x)\cdot x)^+\big|_{\frac{2^*}{2^*-\alpha}}
\left(\di_{\R^N} f^{\frac{2\cdot 2^*}{\alpha}}(v_n) \ dx\right)^{\frac{\alpha}{2^*}}+N c_{1/2}\\[5mm]
&\leq C\left(\di_{\R^N}|v_n|^{2^*}\ dx\right)^{\frac{\alpha}{2^*}}+N c_{1/2}\\[5mm]
&\leq C\left(\di_{\R^N}|\nabla v_n|^2\ dx\right)^{\frac{\alpha}{2}}+N c_{1/2},
\end{array}
\end{equation}
where we used assumption $(V_2)$ and Corollary \ref{cor1}. Since $\alpha\in[1,2)$, we obtain
the boundedness of $\int_{\R^N} |\nabla v_n|^2 \ dx$.

Next we prove that $\int_{\R^N} V(x) f^2(v_n) \ dx$ is bounded. By $(h_1),\ (h_2)$ and Lemma
\ref{lem1}, we get
$$\lim\limits_{t\to 0}\df{|h(f(t))f'(t)t|}{f^2(t)}=0\quad\hbox{and}\quad
\lim\limits_{t\to\infty}\df{|h(f(t))f'(t)t|}{|t|^{2^*}}=0.$$
Thus, for any $\va>0$, there exists $C(\va)>0$ such that
\begin{equation}\label{eq3.2}
|h(f(t))f'(t)t|\leq \va f^2(t)+C(\va)|t|^{2^*},\quad\hbox{for all}\ t\in\R.
\end{equation}
Then we have, using $\langle I'_{\lb_n}(v_n),v_n \rangle=0$ and Lemma \ref{lem1},
\begin{equation*}
\begin{array}{rl}
&\quad\di_{\R^N} |\nabla v_n|^2 \ dx  + \df{1}{2}\di_{\R^N}V(x) f^2(v_n) \ dx\\[5mm]
&\leq\di_{\R^N} |\nabla v_n|^2 \ dx  + \di_{\R^N}V(x) f(v_n)f'(v_n)v_n \ dx \\[5mm]
&=\lb_n\di_{\R^N} h(f(v_n)) f'(v_n) v_n \ dx\\[5mm]
&\leq \va \di_{\R^N}  | f(v_n)|^2 \ dx+C(\va)\di_{\R^N}  |v_n|^{2^*} \ dx\\[5mm]
& \leq \df{\va}{V_0}\di_{\R^N} V(x) f^2(v_n)\ dx + C'(\va)\left(\di_{\R^N} |\nabla v_n|^2\ dx\right)^{\frac{2^*}{2}}.
\end{array}
\end{equation*}
Choosing $\va>0$ small enough, we complete the proof.
\end{proof}

\begin{proof}[Proof of Theorem \ref{thm1} (completed)]
By Lemmas \ref{lem4} and \ref{lem6}, there exist $\{\lb_n\}\subset[\frac{1}{2},1]$ and a bounded
sequence $\{v_n\}\subset X\setminus\{0\}$ such that
$$\lim\limits_{n\to\infty}\lb_n=1,\quad I_{\lb_n}(v_n)=c_{\lb_n},\quad I'_{\lb_n}(v_n)=0.$$
Then
$$\lim\limits_{n\to\infty}I(v_n)=\lim\limits_{n\to\infty}\left(I_{\lb_n}(v_n)
+(\lb_n-1)\di_{\R^N} H(f(v_n))\ dx\right)=\lim\limits_{n\to\infty} c_{\lb_n}=c_1,$$
where we used the fact that the map $\lb\mapsto c_\lb$ is continuous from the left. Similarly,
$I'(v_n)\to 0$ in $X^*$. That is, $\{v_n\}$ is a bounded Palais-Smale sequence for $I$ satisfying
$\lim\limits_{n\to\infty}I(v_n)=c_1$. Using Lemma \ref{lem3} again, we obtain a positive critical
point $v$ of $I$.
\end{proof}

\section{Proof of Theorem \ref{thm2}}

\hspace*{\parindent} This section is devoted to the case of well potential. In what follows,
we always assume that $V(x)\not\equiv V_\infty$ (otherwise Theorem \ref{thm1} gives the conclusion).
We first recall some known results of ``limit" functional
$$I_\lb^\infty(v)=\df{1}{2}\di_{\R^N} \left(|\nabla v|^2+V_\infty f^2(v) \right)\ dx -\lb\di_{\R^N} H(f(v)) \ dx.$$
Define
$$m^\infty_\lb=\inf\{I_\lb^\infty(v)\ |\ v\in H^1(\R^N)\setminus \{0\},\ (I_\lb^\infty)'(v)=0\}.$$
The following proposition  \cite{CJ} presents the results on least energy solutions for autonomous
problems which are crucial to ensure the compactness of bounded Palais-Smale sequences.

\begin{Prop} \label{prop2}
Under assumptions $(h_1)-(h_3),\ m^\infty_\lb>0$ and is achieved by some positive function
$w_\lb^\infty\in H^1(\R^N)$. Moreover, we can find a path $\gamma\in C([0,1], H^1(\R^N))$
such that $\gamma(t)(x)>0$ for all $x\in\R^N$ and $t\in (0,1]$,\ $\gamma(0)=0, \ I_\lb^\infty(\gamma(1))<0$,\
$w_\lb^\infty\in \gamma([0,1])$ and
$$\max_{t\in[0,1]} I_\lb^\infty(\gamma(t))=  I_\lb^\infty(w_\lb^\infty).$$
\end{Prop}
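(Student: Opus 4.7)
The plan is to establish Proposition \ref{prop2} by combining the Mountain Pass Theorem with Pohozaev's identity, working throughout in $H_r^1(\R^N)$ so that we can exploit both the translation invariance of the autonomous functional $I_\lb^\infty$ and the compact embedding $H_r^1(\R^N)\hookrightarrow L^s(\R^N)$ for $s\in(2,2^*)$. The argument is essentially a reprise of the strategy used for Theorem \ref{thm1}, simplified by the absence of the term $\nabla V(x)\cdot x$ in the Pohozaev identity.

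First I would verify that $I_\lb^\infty$ has the Mountain Pass geometry on $H_r^1(\R^N)$: positivity near zero follows from $(h_1)$, $(h_2)$ and parts (2)--(3) of Lemma \ref{lem1}, exactly as in Lemma \ref{lem2}(4) with $V_0$ replaced by $V_\infty$; and the existence of a point $v_1$ with $I_\lb^\infty(v_1)<0$ comes from $(h_3)$ together with the scaling $u_t(x)=t\,u(x/t)$ applied to a nonnegative radial bump, as in Lemma \ref{lem2}(3). Denote the associated mountain pass level by $c_\lb^\infty>0$. To produce a critical point at this level I would apply Jeanjean's monotonicity trick (Proposition \ref{prop1}) to the family $\Phi_\mu(v)=\frac{1}{2}\int_{\R^N}(|\nabla v|^2+V_\infty f^2(v))\,dx-\mu\lb\int_{\R^N}H(f(v))\,dx$, $\mu\in[\frac{1}{2},1]$, obtaining for almost every $\mu$ a bounded Palais--Smale sequence; compactness in $H_r^1(\R^N)$ follows from the subcritical compact embedding together with the argument of Lemma \ref{lem3}, producing a nontrivial radial critical point of $\Phi_\mu$. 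The autonomous Pohozaev identity (Lemma \ref{lem5} with $\nabla V\equiv 0$) then makes the passage from almost--every $\mu$ to $\mu=1$ routine, since the obstruction that forced condition $(V_2)$ in Lemma \ref{lem6} is simply absent. After replacing the limit by its absolute value and invoking the strong maximum principle for $-\Delta v+V_\infty f(v)f'(v)=\lb h(f(v))f'(v)$, we obtain a positive solution $w_\lb^\infty\in H_r^1(\R^N)$ with $I_\lb^\infty(w_\lb^\infty)=c_\lb^\infty$.

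The identification $I_\lb^\infty(w_\lb^\infty)=m_\lb^\infty$ and the explicit path both flow from the same scaling computation. Pohozaev gives, for every nontrivial critical point $v$,
\begin{equation*}
I_\lb^\infty(v)=\df{1}{N}\di_{\R^N}|\nabla v|^2\,dx,
\end{equation*}
while the dilation $v_t(x):=v(x/t)$ satisfies
\begin{equation*}
I_\lb^\infty(v_t)=\df{t^{N-2}}{2}\di_{\R^N}|\nabla v|^2\,dx+t^N\di_{\R^N}\left(\df{V_\infty}{2}f^2(v)-\lb H(f(v))\right)dx,
\end{equation*}
which, combined with Pohozaev, is strictly increasing on $(0,1)$, strictly decreasing on $(1,\infty)$, and maximised at $t=1$ with value $I_\lb^\infty(v)$. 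Choosing $T>1$ large enough that $I_\lb^\infty(v_T)<0$ and setting $\gamma(s)(x):=v(x/(sT))$ for $s\in(0,1]$ with $\gamma(0):=0$ then produces an admissible continuous path along which $\max_s I_\lb^\infty(\gamma(s))=I_\lb^\infty(v)$, which forces $I_\lb^\infty(v)\ge c_\lb^\infty$. Hence $m_\lb^\infty\ge c_\lb^\infty$, while the reverse inequality is immediate from the preceding paragraph. Applying the construction with $v=w_\lb^\infty$ delivers the path claimed in the proposition; positivity of $\gamma(s)(x)$ on $\R^N\times(0,1]$ is inherited from the positivity of $w_\lb^\infty$.

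The step I expect to be the real obstacle is the compactness claim for the bounded Palais--Smale sequence in $H_r^1(\R^N)$. Because $I_\lb^\infty$ contains $f^2(v)$ and $H(f(v))$ rather than pure quadratic and polynomial terms, one has to split each nonlinear integral between $\{|v|\le 1\}$, where Lemma \ref{lem1}(4) gives $f(v)\sim v$, and $\{|v|>1\}$, where $|f(v)|\le 2^{1/4}|v|^{1/2}$ forces $|H(f(v))|\le C|v|^{q/2}$ with $q/2<2^*$. Only after this careful splitting does the compact embedding $H_r^1(\R^N)\hookrightarrow L^s(\R^N)$ yield strong convergence of the nonlinear integrals and hence, via the argument of Lemma \ref{lem3}, strong convergence of the Palais--Smale sequence itself.
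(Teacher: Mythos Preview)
The paper does not prove Proposition~\ref{prop2}; it is quoted from Colin--Jeanjean~\cite{CJ}, who reduce the autonomous problem (via $u=f(v)$) to a semilinear equation $-\Delta v=g(v)$ of Berestycki--Lions type, invoke~\cite{BL} for the positive least-energy solution, and use the Jeanjean--Tanaka dilation path for the mountain-pass characterisation. Your outline is therefore not being compared to a proof in the present paper.

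Your argument is largely sound, and the Pohozaev/dilation computation in your second paragraph is exactly the right mechanism. There is, however, one gap. You produce a positive radial critical point $w_\lb^\infty$ at the \emph{radial} mountain-pass level $c_\lb^\infty$, whereas $m_\lb^\infty$ is defined as the infimum over \emph{all} nontrivial critical points in $H^1(\R^N)$. When you claim that the dilation path of an arbitrary critical point $v$ forces $I_\lb^\infty(v)\ge c_\lb^\infty$, that path $s\mapsto v(\cdot/(sT))$ lies in $H^1(\R^N)$ but not in $H_r^1(\R^N)$ unless $v$ is radial; hence it is admissible only for the $H^1$ mountain-pass level, which a~priori could be strictly smaller than your $c_\lb^\infty$. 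As written you obtain $c_\lb^{\infty,H^1}\le m_\lb^\infty\le I_\lb^\infty(w_\lb^\infty)=c_\lb^\infty$, not equality, so it is not yet established that your $w_\lb^\infty$ realises $m_\lb^\infty$.

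The repair is short: either invoke Schwarz symmetrization as in~\cite{BL} to see that the least energy among all critical points is attained by a radial function (after replacing a candidate by its absolute value), whereupon your dilation argument with radial paths closes the chain of inequalities; or bypass Proposition~\ref{prop1} entirely and run the constrained minimisation of~\cite{BL} on the Pohozaev manifold, which delivers a radial ground state directly. The latter is essentially what~\cite{CJ} does, so your route through the monotonicity trick is a legitimate alternative but needs this extra symmetrization step to identify $w_\lb^\infty$ as a genuine minimiser of $m_\lb^\infty$.
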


\vskip3truemm

\begin{Lem} \label{lem7}
Assume $(V'_1)$ and $(h_1)-(h_3)$ hold. Define
$c_\lb=\inf\limits_{\gamma\in\Gamma}\max\limits_{t\in [0,1]}I_\lb(\gamma(t)),$ where $I_\lb$
is given in Section 3 and $\Gamma=\{\gamma\in C([0,1],H^1(\R^N))\ |\ \gamma(0)=0,\ I_\lb(\gamma(1))<0\}.$
Then $c_\lb <m_\lb^\infty$ for any $\lb\in[\frac{1}{2},1]$.
\end{Lem}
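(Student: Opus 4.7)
The idea is to use the limit problem's mountain pass path from Proposition \ref{prop2} as a competitor in the definition of $c_\lb$, and to exploit the strict inequality $V(x)<V_\infty$ on a set of positive measure (guaranteed by $V\not\equiv V_\infty$ together with the continuity of $V$).

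\emph{Step 1: Pick a competitor path.} For a given $\lb\in[\frac{1}{2},1]$, let $\gamma\in C([0,1],H^1(\R^N))$ be the path produced by Proposition \ref{prop2}: $\gamma(0)=0$, $I_\lb^\infty(\gamma(1))<0$, $\gamma(t)(x)>0$ for all $x\in\R^N$ and all $t\in(0,1]$, and $\max_{t\in[0,1]}I_\lb^\infty(\gamma(t))=m_\lb^\infty$. I would first observe that
$$I_\lb(v)=I_\lb^\infty(v)-\df{1}{2}\di_{\R^N}\bigl(V_\infty-V(x)\bigr)f^2(v)\ dx$$
for every $v\in H^1(\R^N)$; since by $(V'_1)$ one has $V_\infty-V(x)\geq 0$, this yields the pointwise comparison $I_\lb(v)\leq I_\lb^\infty(v)$. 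In particular $I_\lb(\gamma(1))\leq I_\lb^\infty(\gamma(1))<0$, so $\gamma\in\Gamma$ is admissible for $c_\lb$.

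\emph{Step 2: Upgrade to a strict inequality along the path.} For each $t\in(0,1]$, $\gamma(t)(x)>0$ everywhere, hence $f^2(\gamma(t))>0$ everywhere. Because $V\not\equiv V_\infty$ and $V$ is continuous, the set $\{x\in\R^N\ |\ V(x)<V_\infty\}$ is a nonempty open set, and therefore has positive Lebesgue measure. Consequently
$$\di_{\R^N}\bigl(V_\infty-V(x)\bigr)f^2(\gamma(t))\ dx>0\quad\hbox{for every}\ t\in(0,1],$$
which gives $I_\lb(\gamma(t))<I_\lb^\infty(\gamma(t))$ for all $t\in(0,1]$, while trivially $I_\lb(\gamma(0))=I_\lb^\infty(\gamma(0))=0$.

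\emph{Step 3: Conclude by comparing maxima.} Let $t_\ast\in[0,1]$ realize $\max_{t\in[0,1]}I_\lb(\gamma(t))$. If $t_\ast=0$, then this maximum equals $0$, whereas $m_\lb^\infty>0$ by Proposition \ref{prop2}. If $t_\ast\in(0,1]$, Step 2 gives
$$\max_{t\in[0,1]}I_\lb(\gamma(t))=I_\lb(\gamma(t_\ast))<I_\lb^\infty(\gamma(t_\ast))\leq\max_{t\in[0,1]}I_\lb^\infty(\gamma(t))=m_\lb^\infty.$$
In either case, taking the infimum over $\Gamma$ yields $c_\lb\leq \max_{t\in[0,1]}I_\lb(\gamma(t))<m_\lb^\infty$, which is the desired conclusion.

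\emph{Expected main obstacle.} The only delicate point is ruling out the case where the strict gap on the path gets washed out in the sup; this is handled by Step 3 via the dichotomy based on where the maximum of $I_\lb\circ\gamma$ is attained, which is why I emphasized the positivity $\gamma(t)(x)>0$ for $t\in(0,1]$ provided by Proposition \ref{prop2}. Apart from this, the argument is a routine adaptation of the classical Lions-type comparison between local and asymptotic mountain-pass levels, transported to the present setting through the change of variables $u=f(v)$.
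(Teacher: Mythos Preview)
Your proposal is correct and follows essentially the same approach as the paper: use the path $\gamma$ from Proposition \ref{prop2} as a competitor, exploit $V\leq V_\infty$ with $V\not\equiv V_\infty$ and the positivity $\gamma(t)>0$ for $t\in(0,1]$ to obtain $I_\lb(\gamma(t))<I_\lb^\infty(\gamma(t))$ on $(0,1]$, and conclude. Your Step~3 dichotomy on the location of $t_\ast$ is in fact more careful than the paper's one-line chain of inequalities, which tacitly skips the justification that the strict pointwise inequality survives at the maximum.
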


\begin{proof}
Let $w_\lb^\infty$ and $\gamma$ be chosen as in Proposition \ref{prop2}. Then
$$I_\lb(\gamma(t))<I_\lb^\infty(\gamma(t)),\ \ \hbox{for all}\ t\in (0,1],$$
and it follows that
$$c_\lb\leq \max_{t\in[0,1]} I_\lb(\gamma(t))<\max_{t\in[0,1]} I_\lb^\infty( \gamma(t))= m_\lb^\infty,$$
which completes the proof.
\end{proof}

Since $m^\infty_\lb>0$, We have the following decomposition of bounded Palais-Smale sequences,
which was proved in \cite{AW}.

\begin{Prop} \label{prop3}
Suppose that $(V'_1)$ and $(h_1)-(h_2)$ are satisfied. Let $\{v_n\}\subset H^1(\R^N)$ be a
bounded Palais-Smale sequence for $I_\lb$. Then there exists a subsequence of $\{v_n\}$,
denoted also by $\{v_n\}$, an integer $l\in \N \cup \{0\}$, sequences $\{y^k_n\}\subset \R^N$,
$w^k\in H^1({\R^N})$ for $1\leq k\leq l$, such that\\[2mm]
(1)\ $|y^k_n|\rightarrow \infty$ and $|y^k_n-y^{k'}_n|\rightarrow \infty$ as $n\to\infty$, for $k\not=k'$,\\[2mm]
(2)\ $v_n\rightharpoonup v_0$ in $H^1(\R^N)$ with ${I_\lb'}(v_0)=0$,\\[2mm]
(3)\ $w^k\not=0$ and $(I_\lb^\infty)'(w^k)=0$ for $1\leq k \leq l$,\\[2mm]
(4)\ $\left\|v_n-v_0-\sum\limits_{k=1}^{l} w^k(\cdot-y_n^k)\right\|\rightarrow 0$,\\[1mm]
(5)\ $I_\lb (v_n)\rightarrow I_\lb (v_0)+\sum\limits_{k=1}^{l}I_\lb^\infty(w^k)$,\\[1mm]
where we agree that in the case $l=0$ the above holds without $w^k$ and $\{y_n^k\}$.
\end{Prop}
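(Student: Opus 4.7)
The plan is to adapt Struwe's global compactness argument to the transformed functional $I_\lb$, with particular care given to the nonlinear terms $V(x) f^2(v)$ and $H(f(v))$, whose Brezis--Lieb type decompositions require controlling the composition with $f$. First I would extract a weakly convergent subsequence $v_n \rightharpoonup v_0$ in $H^1(\R^N)$; by a.e. convergence, the bounds $|f(t)|\le|t|$ and $|f'(t)|\le1$ from Lemma \ref{lem1}, and the subcritical growth in $(h_2)$, standard density and dominated convergence arguments show $I_\lb'(v_0) = 0$. The first substantive point is to prove a Brezis--Lieb type splitting: writing $\tilde v_n := v_n - v_0$,
\[
\int_{\R^N} V(x) f^2(v_n)\,dx = \int_{\R^N} V(x) f^2(v_0)\,dx + \int_{\R^N} V_\infty f^2(\tilde v_n)\,dx + o(1),
\]
together with an analogous identity for $\int H(f(v_n))\,dx$. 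The first identity combines pointwise a.e. convergence, the decay $V(x)\to V_\infty$ from $(V'_1)$, and tightness outside large balls; the second is a Brezis--Lieb splitting for the composition $H\circ f$. Together these yield $I_\lb(v_n) = I_\lb(v_0) + I_\lb^\infty(\tilde v_n) + o(1)$ and $(I_\lb^\infty)'(\tilde v_n) \to 0$ in $(H^1)^*$.

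Next I would apply Lions' vanishing-vs-nonvanishing dichotomy to $\tilde v_n$. If $\sup_{y\in\R^N}\int_{B_1(y)}|\tilde v_n|^2\,dx \to 0$, then Lions' lemma gives $\tilde v_n \to 0$ in $L^s(\R^N)$ for every $s\in(2,2^*)$; using Lemma \ref{lem1} this forces $\int H(f(\tilde v_n))\,dx \to 0$, and combined with $\langle (I_\lb^\infty)'(\tilde v_n), \tilde v_n\rangle \to 0$ we deduce $\tilde v_n \to 0$ strongly in $H^1$, giving the conclusion with $l=0$. Otherwise there exist $\delta>0$ and points $y_n^1$ with $\int_{B_1(y_n^1)}|\tilde v_n|^2\,dx \ge \delta$; set $w_n^1(x):=\tilde v_n(x+y_n^1)$. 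Since $\tilde v_n \rightharpoonup 0$ one must have $|y_n^1|\to\infty$, and up to a subsequence $w_n^1 \rightharpoonup w^1 \neq 0$; translation invariance of $I_\lb^\infty$ together with $V(\cdot+y_n^1) \to V_\infty$ locally uniformly (from $(V'_1)$) delivers $(I_\lb^\infty)'(w^1)=0$.

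To iterate, set $\tilde v_n^{(2)}(x) := \tilde v_n(x) - w^1(x-y_n^1)$. Because $|y_n^1|\to\infty$, the translate $w^1(\cdot-y_n^1)$ converges weakly to $0$ in $H^1$, and another Brezis--Lieb decomposition shows that $\tilde v_n^{(2)}$ is again a bounded Palais--Smale sequence for $I_\lb^\infty$ with $I_\lb^\infty(\tilde v_n^{(2)}) = I_\lb^\infty(\tilde v_n) - I_\lb^\infty(w^1) + o(1)$. Repeating the dichotomy produces the sequences $\{y_n^k\}$ and nontrivial $w^k$; the orthogonality $|y_n^k - y_n^{k'}|\to\infty$ for $k\neq k'$ is obtained by contradiction, since otherwise two successive weak limits would overlap and force one of them to vanish. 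The procedure terminates after finitely many steps: by Proposition \ref{prop2}, $I_\lb^\infty(w^k) \ge m_\lb^\infty > 0$ for each $k$, while the total energy of $\{v_n\}$ is bounded, so only finitely many profiles can be extracted.

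The main technical obstacle I anticipate is establishing the Brezis--Lieb splittings for the $f$-composed terms. Because $f$ is defined only implicitly via the ODE and transitions from linear behaviour at $0$ (where $f(t)/t\to 1$) to square-root behaviour at infinity (where $f(t)/\sqrt{t}\to 2^{1/4}$), the usual Brezis--Lieb proof must be combined with the pointwise bounds in Lemma \ref{lem1}, the subcritical exponent $q<2\cdot 2^*$ in $(h_2)$, and the decay $V(x)\to V_\infty$ from $(V'_1)$ to obtain the equi-integrability and tail-smallness needed. Once these splittings are in hand, the remainder of the argument is standard Struwe-type bookkeeping.
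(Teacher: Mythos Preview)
The paper does not give its own proof of this proposition; it is stated with the remark ``which was proved in \cite{AW}'' and simply cites Adachi and Watanabe. Your sketch is the standard Struwe--Lions profile decomposition adapted to the transformed functional, and this is indeed the approach taken in \cite{AW}, so in substance your plan is correct.

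One point to tighten: your termination argument invokes Proposition~\ref{prop2} to get $I_\lb^\infty(w^k)\ge m_\lb^\infty>0$, but Proposition~\ref{prop2} requires $(h_1)$--$(h_3)$, whereas Proposition~\ref{prop3} is stated only under $(h_1)$--$(h_2)$. The clean way to terminate is instead via the $H^1$-norm: the Brezis--Lieb splitting gives $\|\tilde v_n^{(k)}\|^2=\|\tilde v_n^{(k+1)}\|^2+\|w^k\|^2+o(1)$, and a uniform lower bound $\|w^k\|\ge\sigma>0$ for nontrivial critical points of $I_\lb^\infty$ follows from $(h_1)$--$(h_2)$ alone by the same computation as in Lemma~\ref{lem9} (which uses only \eqref{eq3.2} and the positivity of $V_\infty$). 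This forces the iteration to stop after finitely many steps without appealing to $(h_3)$.
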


Using Lemma \ref{lem7} and Proposition \ref{prop3}, we can prove

\begin{Lem} \label{lem8}
Assume that $(V'_1)$ and $(h_1)-(h_3)$ hold. Let $\{v_n\}\subset H^1(\R^N)$ be a bounded
Palais-Smale sequence for $I_\lb$ satisfying $\limsup\limits_{n\to\infty}I_\lb(v_n)\leq c_\lb$
and $\|v_n\|\nrightarrow 0$ as $n\to\infty$. Then, up to a subsequence, $\{v_n\}$ converges
weakly to a nontrivial critical point $v_\lb$ of $I_\lb$ with $I_\lb(v_\lb)\leq c_\lb$.
\end{Lem}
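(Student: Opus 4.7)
The plan is to feed the given bounded Palais-Smale sequence into the profile decomposition of Proposition \ref{prop3}, identify the weak limit as the desired critical point, and rule out every other possibility by invoking the strict inequality $c_\lb<m_\lb^\infty$ from Lemma \ref{lem7}.

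First, since $\{v_n\}$ is bounded in $H^1(\R^N)$, Proposition \ref{prop3} yields (after passing to a subsequence) a weak limit $v_0\in H^1(\R^N)$, an integer $l\geq 0$, translation sequences $\{y_n^k\}$ with $|y_n^k|\to\infty$, and nonzero critical points $w^k$ of $I_\lb^\infty$ for $1\leq k\leq l$ satisfying (1)--(5) of that proposition. Property (2) already gives that $v_0$ is a critical point of $I_\lb$, so the natural candidate is $v_\lb:=v_0$; what remains is to show $v_0\not=0$ and $I_\lb(v_0)\leq c_\lb$.

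The main step is ruling out $v_0=0$. I would argue by contradiction: assume $v_0=0$, so $I_\lb(v_0)=0$, and distinguish two cases. If $l=0$, then property (4) degenerates to $\|v_n\|\to 0$, contradicting the hypothesis $\|v_n\|\nrightarrow 0$. If $l\geq 1$, then every $w^k$ is a nonzero critical point of $I_\lb^\infty$, so by the definition of $m_\lb^\infty$ we have $I_\lb^\infty(w^k)\geq m_\lb^\infty$ for each $k$; summing and using property (5) gives
$$\lim_{n\to\infty} I_\lb(v_n)=\sum_{k=1}^{l} I_\lb^\infty(w^k)\geq m_\lb^\infty.$$
However the hypothesis $\limsup I_\lb(v_n)\leq c_\lb$ combined with Lemma \ref{lem7} yields $\lim I_\lb(v_n)\leq c_\lb<m_\lb^\infty$, a contradiction. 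Thus $v_0\not=0$.

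To finish, the energy estimate follows directly from (5): since each $I_\lb^\infty(w^k)\geq m_\lb^\infty>0$, the nonnegative tail can be discarded, giving
$$I_\lb(v_\lb)\leq I_\lb(v_\lb)+\sum_{k=1}^{l}I_\lb^\infty(w^k)=\lim_{n\to\infty}I_\lb(v_n)\leq c_\lb.$$
The principal difficulty is precisely the case $l\geq 1$ in the contradiction step; everything hinges on the strict inequality $c_\lb<m_\lb^\infty$ supplied by Lemma \ref{lem7}, which is what forbids energy from escaping to infinity via bubbles of the limit problem and thereby saves compactness in the well-potential setting.
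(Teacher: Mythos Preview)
Your proof is correct and follows essentially the same route as the paper: apply the profile decomposition of Proposition~\ref{prop3}, then use the strict inequality $c_\lb<m_\lb^\infty$ from Lemma~\ref{lem7} together with $I_\lb^\infty(w^k)\geq m_\lb^\infty>0$ to control the bubbles. The only cosmetic difference is that the paper organizes the case split according to the sign of $I_\lb(v_\lb)$ (obtaining, as a byproduct, $l=0$ and strong convergence when $I_\lb(v_\lb)\geq 0$), whereas you split directly on whether $v_0=0$; both arrive at the same conclusion.
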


\begin{proof}
By Proposition \ref{prop3}, up to a subsequence, there exist $l\in\N\cup \{0\}$ and
$v_\lb\in H^1({\R^N})$ such that $v_n\rightharpoonup v_\lb$ in $H^1(\R^N),$ \ ${I}_\lb'(v_\lb)=0$
and
$$I_\lb (v_n)\rightarrow I_\lb (v_\lb)+\sum_{k=1}^{l}I_\lb^\infty(w_\lb^k),$$
where $\{w_\lb^k\}_{k=1}^{l}$ are nontrivial critical points of $I_\lb^\infty$.

If $I_\lb(v_\lb)<0$, then the proof is complete. If $I_\lb(v_\lb)\geq 0$, then we claim that
$l=0$. Otherwise,
$$c_\lb\geq\lim_{n\rightarrow \infty}I_\lb(v_n)=I_\lb (v_\lb)
+\sum_{k=1}^{l}I_\lb^\infty(w_\lb^k)\geq m_\lb^\infty,$$
which contradicts Lemma \ref{lem7}.  Thus $v_n\to v_\lb$ in $H^1(\R^N)$ and $I_\lb(v_\lb)\leq c_\lb$.
Since $\|v_n\|\nrightarrow 0$ as $n\to\infty$, $v_\lb$ is a nontrivial critical point of $I_\lb$.
This completes the proof.
\end{proof}

\begin{Lem} \label{lem9}
Under the assumptions of Theorem \ref{thm2}, there exists $\sigma>0$ (independent of $\lb\in [\frac{1}{2},1]$)
such that, if $v$ is a nontrivial critical point of $I_\lb$, then $\|v\|\geq\sigma$.
\end{Lem}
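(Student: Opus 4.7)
The plan is to test $I'_\lambda(v)=0$ against $v$ itself and extract a purely $\lambda$-independent Sobolev-type lower bound.

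First I would write the equation $\langle I'_\lambda(v),v\rangle=0$ in the form
\begin{equation*}
\int_{\R^N}|\nabla v|^2\,dx+\int_{\R^N}V(x)f(v)f'(v)v\,dx=\lambda\int_{\R^N}h(f(v))f'(v)v\,dx.
\end{equation*}
On the left side, I would apply Lemma \ref{lem1}(7) to get $f(v)f'(v)v\geq\tfrac12 f^2(v)$ and use $V(x)\geq V_0$ to bound the left-hand side below by $\int|\nabla v|^2+\tfrac{V_0}{2}\int f^2(v)$. On the right side I would invoke the pointwise estimate (\ref{eq3.2}), which was already established via $(h_1)$, $(h_2)$ and Lemma \ref{lem1}, to obtain
\begin{equation*}
\lambda\int_{\R^N}|h(f(v))f'(v)v|\,dx\leq\varepsilon\int_{\R^N}f^2(v)\,dx+C(\varepsilon)\int_{\R^N}|v|^{2^*}\,dx,
\end{equation*}
uniformly in $\lambda\in[\tfrac12,1]$.

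Next I would choose $\varepsilon=V_0/4$ (which is allowed since $\lambda\leq 1$), absorb the $\int f^2(v)$ term from the right into the left, and arrive at
\begin{equation*}
\int_{\R^N}|\nabla v|^2\,dx\leq C\int_{\R^N}|v|^{2^*}\,dx,
\end{equation*}
with $C$ depending only on $V_0$ and the nonlinearity (not on $\lambda$). Then the Sobolev inequality $\int|v|^{2^*}\leq S\bigl(\int|\nabla v|^2\bigr)^{2^*/2}$ yields
\begin{equation*}
\int_{\R^N}|\nabla v|^2\,dx\leq CS\left(\int_{\R^N}|\nabla v|^2\,dx\right)^{2^*/2}.
\end{equation*}
Since $v$ is nontrivial in $H^1(\R^N)$, its gradient cannot vanish identically, so I can divide and conclude
\begin{equation*}
\int_{\R^N}|\nabla v|^2\,dx\geq(CS)^{-2/(2^*-2)}=:\sigma_1>0,
\end{equation*}
a bound independent of $\lambda\in[\tfrac12,1]$. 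Consequently $\|v\|^2\geq\int|\nabla v|^2\geq\sigma_1$, and the lemma follows with $\sigma:=\sqrt{\sigma_1}$.

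There is no serious obstacle here: everything is routine once one notices that Lemma \ref{lem1}(7) and estimate (\ref{eq3.2}) deliver, respectively, a coercive quadratic lower bound from the linear pieces and a subcritical upper bound from the nonlinear piece. The only point requiring care is checking that the constant $C(\varepsilon)$ arising from (\ref{eq3.2}) is independent of $\lambda$; but since $\lambda$ multiplies $C(\varepsilon)$ by at most $1$, the whole argument is uniform in $\lambda\in[\tfrac12,1]$, which is exactly what is needed.
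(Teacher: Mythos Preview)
Your proof is correct and follows essentially the same approach as the paper: test $\langle I'_\lambda(v),v\rangle=0$, use Lemma~\ref{lem1}(7) on the left, estimate~(\ref{eq3.2}) on the right, absorb, and apply the Sobolev inequality to force a uniform lower bound. The only cosmetic difference is that the paper keeps the full quantity $\int\bigl(|\nabla v|^2+V(x)f^2(v)\bigr)$ through to the end and then passes to $\|v\|$ via $|f(t)|\le|t|$, whereas you drop the potential term early and bound $\int|\nabla v|^2$ directly; both routes are equally valid.
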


\begin{proof}
It follows from $\langle I'_\lb(v),v\rangle=0$ that
$$\di_{\R^N}\left(|\nabla v|^2 + V(x)f(v)f'(v)v\right)\ dx=\lb\di_{\R^N}h(f(v))f'(v)v \ dx.$$
By Lemma \ref{lem1} and \eqref{eq3.2}, we have
$$\di_{\R^N}|\nabla v|^2 \ dx + \df{1}{2}\di_{\R^N}V(x)f^2(v)\ dx\leq\df{1}{4}\di_{\R^N}V(x)f^2(v)\ dx
+C\di_{\R^N}|v|^{2^*}\ dx,$$
which implies
$$\di_{\R^N}\left(|\nabla v|^2 + V(x)f^2(v)\right)\ dx\leq C\di_{\R^N}|v|^{2^*}\ dx\leq
C\left[\di_{\R^N}\left(|\nabla v|^2 + V(x)f^2(v)\right)\ dx\right]^{\frac{2^*}{2}}.$$
Since $v\neq 0$, we obtain
$$\di_{\R^N}\left(|\nabla v|^2 + V(x)f^2(v)\right)\ dx\geq\sigma_0$$
for some positive constant $\sigma_0$. Then the conclusion follows immediately from $(V'_1)$
and $|f(t)|\leq |t|$ for all $t\in\R$.
\end{proof}

\begin{proof}[Proof of Theorem \ref{thm2}]
It is easy to see that, under hypotheses of Theorem \ref{thm2}, all assumptions in Proposition
\ref{prop1} are satisfied. Then there exists $\cj_1\subset[\frac{1}{2},1]$ with $meas(\cj_1)=0$
such that, for any $\lb\in[\frac{1}{2},1]\setminus\cj_1$, there is a bounded Palais-Smale sequence
$\{v_n\}\subset H^1(\R^N)$ for $I_\lb$ satisfying $\lim\limits_{n\to\infty}I_\lb(v_n)=c_\lb$.
Since $c_\lb>0$, we know $\|v_n\|\nrightarrow 0$ as $n\to\infty$. Using Lemmas \ref{lem8} and
\ref{lem9}, for any $\lb\in[\frac{1}{2},1]\setminus\cj_1$, we obtain a nontrivial critical
point $v_\lb$ of $I_\lb$ with $I_\lb(v_\lb)\leq c_\lb$ and $\|v_\lb\|\geq \sigma>0$.

Choosing $\lb_n\subset[\frac{1}{2},1]\setminus\cj_1$ such that $\lb_n\to 1$ as $n\to\infty$,
we obtain a sequence $\{v_n\}\subset H^1(\R^N)$ satisfying
$$\|v_n\|\geq\sigma>0,\quad I_{\lb_n}(v_n)\leq c_{\lb_n}\leq c_{1/2}, \quad I'_{\lb_n}(v_n)=0.$$
By Lemma \ref{lem6}, $\{v_n\}$ is bounded in $H^1(\R^N)$. Then
$$\limsup\limits_{n\to\infty}I(v_n)=\limsup\limits_{n\to\infty}\left(I_{\lb_n}(v_n)
+(\lb_n-1)\di_{\R^N} H(f(v_n))\ dx\right)\leq\lim\limits_{n\to\infty} c_{\lb_n}=c_1,$$
and $I'(v_n)\to 0$ in $H^{-1}(\R^N)$. That is, $\{v_n\}$ is a bounded Palais-Smale sequence
for $I$ satisfying $\limsup\limits_{n\to\infty}I(v_n)\leq c_1$ and $\|v_n\|\nrightarrow 0$
as $n\to\infty$. Using Lemma \ref{lem8} again, we obtain a nontrivial critical point $v$ of $I$.
A standard argument can show that $v>0$.
\end{proof}

\section{Proof of Theorem \ref{thm3}}

\hspace*{\parindent}  The goal of this section is to prove Theorem \ref{thm3}. To this end,
we use the same idea as in \cite{CW}. Firstly, since there is no assumption on $h$ near infinity,
we need to modify the nonlinearity to a new one which satisfies $(h_1)-(h_4)$. Thanks to Theorem
\ref{thm1}, the modified problem has a positive solution. Secondly, we shall prove that the
solution obtained converges to zero in $L^\infty$-norm as $\mu\to\infty$. Thus, for $\mu$ large,
it is in fact a positive solution of original problem \eqref{eq1.3}.

By $(h'_1)$ and $(h'_2)$, there exist two positive constants $K_0$ and $K_1$ such that
\begin{equation}\label{eq5.1}
h(t) \leq \df{1}{q}K_1 t^{q-1}\quad\hbox{and}\quad K_0 t^{p}\leq H(t) \leq K_1 t^{q}
\end{equation}
for $t>0$ sufficiently small. Choose $\delta>0$ such that \eqref{eq5.1}  holds for $0<t\leq 2\delta$.
Let $\xi$ be a cut-off function satisfying $0\leq\xi\leq 1,\ \xi(t)=1$ for $t\leq\delta$,
\ $\xi(t)=0$ for $t\geq 2\delta$ and $|\xi'(t)|\leq 2/\delta$ for $\delta\leq t\leq 2\delta$.
Define
$$\wt{H}(t)=\xi(t)H(t)+(1-\xi(t))K_1|t|^q$$
and $\wt{h}(t)=\wt{H}'(t)$. Then it is easy to verify that $\wt{h}$ satisfies $(h_1)-(h_4)$.
Moreover, we have

\begin{Lem}\label{lem10}
(1) There exists $C>0$ such that
\begin{equation}\label{eq5.2}
\wt{h}(t)\leq C t^{q-1}, \quad\hbox{for all}\ t>0,
\end{equation}
 and
\begin{equation}\label{eq5.3}
\wt{h}(t)\leq \va t+C\va^{\frac{q-2^*}{q-2}} t^{2^*-1}, \quad\hbox{for all}\ t>0\ \hbox{and}\ \va\in (0,1).
\end{equation}
(2) For any $T>0$, there exists $C(T)>0$ such that
\begin{equation}\label{eq5.4}
\wt{H}(t)\geq C(T) t^p, \quad\hbox{for all}\ t\in [0,T].
\end{equation}
\end{Lem}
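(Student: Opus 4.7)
The plan is to compute $\wt{h}$ explicitly from the definition of $\wt{H}$ and then split the analysis into the three regions $[0,\delta]$, $[\delta,2\delta]$, and $[2\delta,\infty)$ dictated by the cut-off $\xi$. All the work is routine once the pieces are laid out; the only spot that needs care is the transitional band $[\delta,2\delta]$, where $\wt h$ and $\wt H$ have no clean closed form.

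For part (1), differentiating the definition of $\wt H$ yields
$$\wt{h}(t)=\xi(t)h(t)+(1-\xi(t))qK_1 t^{q-1}+\xi'(t)\bigl(H(t)-K_1 t^q\bigr)$$
for $t>0$. On $[0,\delta]$, $\xi\equiv 1$ and $\xi'\equiv 0$, so $\wt h(t)=h(t)\le \frac{K_1}{q}t^{q-1}$ by \eqref{eq5.1}. On $[2\delta,\infty)$, $\xi\equiv 0$ and $\xi'\equiv 0$, so $\wt h(t)=qK_1 t^{q-1}$. On $[\delta,2\delta]$, each of $h(t)$, $t^{q-1}$, $H(t)$, $K_1 t^q$ and $\xi'(t)$ is bounded, so $\wt h$ is bounded there; since $t^{q-1}$ is also bounded below on $[\delta,2\delta]$, that bound can be absorbed into $C t^{q-1}$. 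Combining the three regions gives \eqref{eq5.2}. For \eqref{eq5.3}, I then split according to whether $t^{q-2}\le\va$ or $t^{q-2}>\va$: in the first case $Ct^{q-1}\le C\va\, t$, and in the second $t>\va^{1/(q-2)}$, so
$$Ct^{q-1}=C t^{q-2^*}\, t^{2^*-1}\le C\bigl(\va^{1/(q-2)}\bigr)^{q-2^*} t^{2^*-1}=C\va^{(q-2^*)/(q-2)}\, t^{2^*-1},$$
using $q<2^*$ so $q-2^*<0$. A harmless rescaling of $\va$ gives the stated form.

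For part (2), I again split $[0,T]$ into the same three pieces. On $[0,\delta]$, $\wt H(t)=H(t)\ge K_0 t^p$ is immediate from \eqref{eq5.1}. On $[2\delta,T]$, $\wt H(t)=K_1 t^q$, and since $q\le p$ (forced by $(h'_1)$ and $(h'_2)$ as observed in Remark~1.2), $t^q\ge T^{q-p}t^p$, so $\wt H(t)\ge K_1 T^{q-p}t^p$. On the transitional band $[\delta,2\delta]$, $\wt H(t)$ is a convex combination of $H(t)\ge K_0\delta^p$ and $K_1 t^q\ge K_1\delta^q$, hence bounded below by a positive constant, while $t^p\le(2\delta)^p$; the ratio $\wt H(t)/t^p$ is therefore bounded below by a positive constant depending only on $\delta,p,q,K_0,K_1$. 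Taking $C(T)$ to be the minimum of the three resulting positive constants finishes the proof.

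The main (minor) obstacle is checking that the constants produced in the middle band glue correctly with the ones on either side, and that the inequality $q\le p$ used for the region $[2\delta,T]$ is extracted cleanly from the hypotheses $(h'_1)$--$(h'_2)$. Both points are essentially bookkeeping; no new ideas beyond case analysis and Young-type splitting are needed.
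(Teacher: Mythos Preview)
Your argument is correct. The paper does not supply a proof of this lemma at all --- it simply states it and moves on --- so your case-by-case computation of $\wt h$ and $\wt H$ on the three regions $[0,\delta]$, $[\delta,2\delta]$, $[2\delta,\infty)$, together with the Young-type splitting $t^{q-1}\le \va t + \va^{(q-2^*)/(q-2)}t^{2^*-1}$, is exactly the routine verification the authors had in mind. The only points worth flagging are that you implicitly use the continuity of $h$ (from $(h'_1)$) to bound it on the compact band $[\delta,2\delta]$, and that the rescaling of $\va$ at the end of \eqref{eq5.3} requires the constant $C$ from \eqref{eq5.2} to satisfy $C\ge 1$, which is harmless; both are fine.
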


Now we consider the modified problem
\begin{equation}\label{eq5.5}
-\Delta u+V(x)u- \Delta(u^2)u =\mu \wt{h}(u),   \ \ \mbox{in} \ \R^N,
\end{equation}
with the natural energy functional given by
$$\wt{J}_\mu(u)=\df{1}{2}\di_{\R^N}(|\nabla u|^2+V(x)u^2)\ dx + \di_{\R^N}u^2|\nabla u|^2\ dx-\mu\di_{\R^N} \wt{H}(u)\ dx.$$
As in Section 2, setting $v=f^{-1}(u)$, we obtain
$$\wt{I}_\mu(v):=\wt{J}_\mu(f(v))=\df{1}{2}\di_{\R^N}\left(|\nabla v|^2+V(x) f^2(v) \right)\ dx
-\mu\di_{\R^N} \wt{H}(f(v))\ dx.$$

In order to show that solutions of the modified problem \eqref{eq5.5} are in fact solutions
of the original problem \eqref{eq1.3}, we need the following $L^\infty$-estimate by Moser
iteration. We give the proof for completeness.

\begin{Lem}\label{lem11}
If $v$ is a positive critical point of $\wt{I}_\mu$, then $v\in L^\infty(\R^N)$ and there
exists $C>0$ (independent of $\mu$) such that
$$|v|_\infty\leq C\left(\mu\|v\|^{q-2}\right)^{\frac{1}{2^*-q}}\|v\|.$$
\end{Lem}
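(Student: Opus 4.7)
The plan is to apply Moser iteration to the Euler--Lagrange equation associated with $\wt{I}_\mu$. Since $v>0$ is a critical point, it weakly solves
$$-\Delta v + V(x)f(v)f'(v) = \mu\,\wt{h}(f(v))f'(v), \quad \text{in } \R^N.$$
Because $V\geq V_0>0$ and $f,f'\geq 0$ on $\R^+$ by Lemma \ref{lem1}, while $\wt{h}\geq 0$, the term $V(x)f(v)f'(v)$ on the left is nonnegative and may be dropped. Combining \eqref{eq5.2} with $|f(t)|\leq|t|$ and $|f'(t)|\leq 1$ then reduces the equation to the weak differential inequality
$$-\Delta v \leq C\mu\, v^{q-1} \quad \text{in } \R^N,$$
with $C>0$ independent of $\mu$.

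For $\beta\geq 0$ and $L>0$ set $v_L=\min\{v,L\}$ and test the above with $\phi=vv_L^{2\beta}\in H^1(\R^N)$ (admissible because $v>0$ and $v_L\leq L$). Splitting the integrals on $\{v\leq L\}$ and $\{v>L\}$ yields
$$\int_{\R^N}|\nabla(vv_L^\beta)|^2\,dx \leq (\beta+1)^2\int_{\R^N}\nabla v\cdot\nabla\phi\,dx \leq C\mu(\beta+1)^2\int_{\R^N} v^{q-2}(vv_L^\beta)^2\,dx,$$
where I used $v^{q-1}\phi = v^{q-2}(vv_L^\beta)^2$ in the last step. Applying the Sobolev inequality to $vv_L^\beta$ on the left and letting $L\to\infty$ (Fatou on the left, monotone convergence on the right) gives the core iteration bound
$$|v|_{2^*(\beta+1)}^{2(\beta+1)} \leq C\mu(\beta+1)^2\,|v|_{q+2\beta}^{q+2\beta}.$$

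I would then iterate by defining $p_0=2^*$ and $p_{k+1}=\tfrac{2^*}{2}(p_k-q+2)$, equivalently by choosing $\beta=\beta_k=(p_k-q)/2$ at the $k$-th stage. The hypothesis $q<2^*$ (from $(h'_1)$) places $p_0$ strictly above the fixed point $p^*=2^*(q-2)/(2^*-2)$, so $p_k\nearrow\infty$ geometrically and the previous display becomes a valid recursion for $|v|_{p_k}$. Taking logarithms and iterating, the crucial identity is the telescoping product
$$\prod_{j=0}^{k-1}\frac{p_j}{p_j-q+2} = \Bigl(\frac{2^*}{2}\Bigr)^{k}\frac{p_0}{p_k} \longrightarrow \frac{p_0}{p_0-p^*} = \frac{2^*-2}{2^*-q},$$
which, together with the convergence of the essentially geometric series $\sum_{j\geq 0} 1/(p_j-q+2)$, produces the sharp exponents
$$|v|_\infty \leq C\,\mu^{1/(2^*-q)}\,|v|_{2^*}^{(2^*-2)/(2^*-q)}$$
in the limit $k\to\infty$. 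The continuous embedding $|v|_{2^*}\leq C\|v\|$ and a rearrangement then yield $|v|_\infty \leq C(\mu\|v\|^{q-2})^{1/(2^*-q)}\|v\|$, as desired.

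The main technical obstacle is the bookkeeping required to track constants through the iteration and recover the sharp exponents $1/(2^*-q)$ on $\mu$ and $(2^*-2)/(2^*-q)$ on $\|v\|$; the explicit telescoping identity above, which rests on the affine structure of the recursion $p_{k+1}=(2^*/2)(p_k-q+2)$, is what makes this feasible. The secondary issues, namely admissibility of $\phi$ and justification of the $L\to\infty$ limit, are routine because $v>0$ and $v\in H^1(\R^N)\hookrightarrow L^{2^*}(\R^N)$ supplies the base integrability to start the iteration.
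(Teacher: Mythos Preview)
Your proof is correct and, like the paper's, proceeds by Moser iteration after testing the Euler--Lagrange equation with truncated powers of $v$. The one substantive difference is in the bookkeeping of the iteration. The paper applies H\"older at each step to split off the factor $v^{q-2}$ into $L^{2^*/(q-2)}$, bounding it once and for all by $\|v\|^{q-2}$; this reduces the recursion to a pure geometric one, $|v|_{\gamma^{m+1}\cdot 2^*}\leq (C\gamma_m^2\mu\|v\|^{q-2})^{1/(2\gamma_m)}|v|_{\gamma^m\cdot 2^*}$ with fixed ratio $\gamma=(2^*-q+2)/2$, and the exponents $\tfrac{1}{2^*-q}$ and $\tfrac{2^*-2}{2^*-q}$ drop out from the geometric series $\sum\gamma^{-i}$. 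You instead keep the full $L^{p_k}$ norm in the recursion $p_{k+1}=\tfrac{2^*}{2}(p_k-q+2)$ and extract the same exponents via the telescoping identity $\prod_{j=0}^{k-1}\tfrac{p_j}{p_j-q+2}=(2^*/2)^k p_0/p_k$. Your route avoids the extra H\"older step at each stage at the cost of a slightly more delicate limit computation; the paper's route is more mechanical but introduces one more inequality per iteration. Both are standard variants and yield identical final estimates.
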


\begin{proof}
For each $k>0$, we define
\begin{equation*}
v_k=\left\{
\begin{aligned}
& v,\quad\hbox{if}\ v\leq k,\\
& k,\quad\hbox{if}\ v\geq k.
\end{aligned}
\right.
\end{equation*}
Set $w_k=v v_k^{2(\gamma-1)}$ and $\wt{w}_k=v v_k^{\gamma-1}$, where $\gamma>1$ is to be
determined later. It follows from $\langle \wt{I}'_\mu(v), w_k\rangle=0$, \eqref{eq5.2}
and Lemma \ref{lem1} that
\begin{equation*}
\begin{array}{rl}
\di_{\R^N} v_k^{2(\gamma-1)}|\nabla v|^2 \ dx
&\leq \mu\di_{\R^N} \wt{h}(f(v))f'(v)v v_k^{2(\gamma-1)} \ dx \\[5mm]
&\leq C \mu \di_{\R^N}f^{q-1}(v)v v_k^{2(\gamma-1)} \ dx \\[5mm]
&\leq C \mu \di_{\R^N} v^q v_k^{2(\gamma-1)} \ dx \\[5mm]
&= C \mu \di_{\R^N} v^{q-2} \wt{w}_k^2 \ dx.
\end{array}
\end{equation*}
Combining this with Gagliardo-Nirenberg-Sobolev inequality yields
\begin{equation*}
\begin{array}{rl}
\left(\di_{\R^N} \wt{w}_k^{2^*} \ dx\right)^{\frac{2}{2^*}}
&\leq C\di_{\R^N} |\nabla \wt{w}_k|^2 \ dx\\[5mm]
&\leq C \di_{\R^N}\left(v_k^{2(\gamma-1)}|\nabla v|^2+(\gamma-1)^2v^2v_k^{2(\gamma-1)-2}|\nabla v_k|^2\right)\ dx \\[5mm]
&\leq C  \gamma^2 \di_{\R^N} v_k^{2(\gamma-1)}|\nabla v|^2\ dx \\[5mm]
&\leq C \gamma^2 \mu \di_{\R^N} v^{q-2} \wt{w}_k^2\ dx,
\end{array}
\end{equation*}
where we have used the facts that $v^2|\nabla v_k|^2\leq v_k^2|\nabla v|^2$ and
$1+(\gamma-1)^2<\gamma^2$ for $\gamma>1$. By H\"{o}lder inequality and Sobolev inequality,
\begin{equation*}
\begin{array}{rl}
\left(\di_{\R^N} \left(v v_k^{\gamma-1}\right)^{2^*}\ dx\right)^{\frac{2}{2^*}}
&\leq \left(\di_{\R^N}\wt{w}_k^{2^*}\ dx\right)^{\frac{2}{2^*}}
\leq C\gamma^2\mu\di_{\R^N} v^{q-2}\wt{w}_k^2\ dx\\[5mm]
&\leq C \gamma^2 \mu \left(\di_{\R^N} v^{2^*} \ dx\right)^{\frac{q-2}{2^*}}\left( \di_{\R^N} \wt{w}_k^{\frac{2\cdot2^*}{2^*-q+2}} \ dx \right)^\frac{2^*-q+2}{2^*}\\[5mm]
&\leq C\gamma^2 \mu \|v\|^{q-2}\left( \di_{\R^N}v^{\frac{2\gamma\cdot2^*}{2^*-q+2}}\ dx \right)^\frac{2^*-q+2}{2^*}.
\end{array}
\end{equation*}
Denote $\alpha_0=\frac{2\cdot2^*}{2^*-q+2}$. Choosing $\gamma=\frac{2^*-q+2}{2}$, we have $\frac{2\gamma\cdot2^*}{2^*-q+2}=2^*$ and so
$$\left(\di_{\R^N} \left(v v_k^{\gamma-1}\right)^{2^*}\ dx\right)^{\frac{2}{2^*}}\leq
C \gamma^2 \mu \|v\|^{q-2}|v|_{\gamma\alpha_0}^{2\gamma}.$$
Letting $k\to\infty$, by Fatou's lemma, we obtain
$$|v|_{\gamma\cdot 2^*}\leq \left(C\gamma^2 \mu \|v\|^{q-2}\right)^{\frac{1}{2\gamma}}|v|_{\gamma \alpha_0}.$$

For $m=0,1,\cdots$, set $\gamma_m=\gamma^{m+1}$. Repeating the above arguments for $\gamma_1$,
we have
\begin{equation*}
\begin{array}{rl}
|v|_{\gamma_1 \cdot 2^*}
&\leq \left(C\gamma_1^2 \mu \|v\|^{q-2}\right)^{\frac{1}{2\gamma_1}}|v|_{\gamma_1\alpha_0} \\[5mm]
&\leq \left(C\gamma_1^2 \mu\|v\|^{q-2}\right)^{\frac{1}{2\gamma_1}}
\left(C\gamma^2 \mu\|v\|^{q-2}\right)^{\frac{1}{2\gamma}}|v|_{\gamma\alpha_0}\\[5mm]
&=\left(C\mu\|v\|^{q-2}\right)^{\frac{1}{2\gamma_1}
+\frac{1}{2\gamma}}(\gamma)^{\frac{1}{\gamma}}(\gamma_1)^{\frac{1}{\gamma_1}}|v|_{2^*}.
\end{array}
\end{equation*}
By iteration, it follows that
$$|v|_{\gamma_m\cdot 2^*}\leq
\left(C\mu \|v\|^{q-2}\right)^{\frac{1}{2\gamma}\sum\limits_{i=0}^m\gamma^{-i}}
(\gamma)^{\frac{1}{\gamma}\sum\limits_{i=0}^m\gamma^{-i}}
(\gamma)^{\frac{1}{\gamma}\sum\limits_{i=0}^m i\gamma^{-i}}|v|_{2^*}.$$
Since $\gamma>1$, the series $\sum\limits_{i=0}^\infty \gamma^{-i}$ and
$\sum\limits_{i=0}^\infty i\gamma^{-i}$ are convergent. Taking $m\to\infty$, we get
$v\in L^\infty(\R^N)$ and
$$|v|_\infty\leq C\left(\mu\|v\|^{q-2}\right)^{\frac{1}{2^*-q}}\|v\|.$$
This completes the proof.
\end{proof}

\begin{Lem}\label{lem12}
Let $\mu>\frac{V_0}{4}$. If $v$ is a critical point of $\wt{I}_\mu$ with $\wt{I}_\mu(v)=d_\mu$,
then there exists $C>0$ (independent of $\mu$) such that
$$\|v\|^2\leq C\left(d_\mu+d_\mu^{\frac{2^*}{2}}+ \mu^{\frac{2^*-2}{q-2}}d_\mu^{\frac{2^*}{2}}\right).$$
\end{Lem}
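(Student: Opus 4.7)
The plan is to combine three ingredients: the Pohozaev identity of Lemma \ref{lem5} applied to $\wt{I}_\mu$, the testing identity $\langle \wt{I}'_\mu(v), v\rangle=0$, and the pointwise bound \eqref{eq5.3} on $\wt{h}$. The key (and only delicate) observation is that combining Pohozaev with the level identity cancels the potentially huge $\mu\int\wt{H}(f(v))$ term and produces a $\mu$-independent bound on $\int|\nabla v|^2$. Without this cancellation the later estimates would degenerate as $\mu\to\infty$.

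The first step is to extract a clean gradient bound. The Pohozaev identity for $\wt{I}_\mu$ reads
\[
\tfrac{N-2}{2}\int_{\R^N}|\nabla v|^2 + \tfrac{N}{2}\int_{\R^N} V(x) f^2(v) + \tfrac{1}{2}\int_{\R^N}(\nabla V(x)\cdot x) f^2(v) - \mu N\int_{\R^N}\wt{H}(f(v)) = 0,
\]
and subtracting $N\wt{I}_\mu(v)=Nd_\mu$ annihilates both the $\int V f^2(v)$ and the $\mu\int\wt{H}(f(v))$ terms, leaving $\int|\nabla v|^2 = \tfrac{1}{2}\int(\nabla V(x)\cdot x) f^2(v) + N d_\mu$. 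Condition $(V'_2)$ then gives $\int|\nabla v|^2 \leq N d_\mu$, and Sobolev's inequality upgrades this to $\int v^{2^*} \leq C d_\mu^{2^*/2}$.

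Next I would test the equation against $v$ and apply Lemma \ref{lem1}(6)--(7) to get $\int|\nabla v|^2 + \tfrac{1}{2}\int V f^2(v) \leq \mu\int \wt{h}(f(v)) f'(v) v\,dx$. Using \eqref{eq5.3} with a small parameter $\va\in(0,1)$, together with $f'(v)v\leq f(v)$ and $|f(v)|\leq|v|$, the right-hand side is bounded by $\tfrac{\mu\va}{V_0}\int V f^2(v) + C\mu\va^{(q-2^*)/(q-2)}\int v^{2^*}$. Choosing $\va = V_0/(4\mu)$ — which lies in $(0,1)$ precisely because $\mu>V_0/4$ — absorbs the first term into the left-hand side, and the exponent on $\mu$ collapses algebraically to $(2^*-2)/(q-2)$, producing
\[
\int_{\R^N}|\nabla v|^2 + \tfrac{1}{4}\int_{\R^N} V f^2(v) \leq C \mu^{(2^*-2)/(q-2)}\int_{\R^N} v^{2^*} \leq C \mu^{(2^*-2)/(q-2)} d_\mu^{2^*/2}.
\]

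Finally, to reassemble $\|v\|^2 = \int|\nabla v|^2 + \int v^2$, I would split the $L^2$ integral over $\{|v|\leq 1\}$ and $\{|v|>1\}$: on the first set Lemma \ref{lem1}(8) gives $v^2 \leq C f^2(v) \leq (C/V_0) V(x) f^2(v)$, while on the second one trivially $v^2 \leq v^{2^*}$. Adding $\int|\nabla v|^2 \leq Nd_\mu$ to these two contributions yields exactly $\|v\|^2 \leq C\bigl(d_\mu + d_\mu^{2^*/2} + \mu^{(2^*-2)/(q-2)} d_\mu^{2^*/2}\bigr)$, as claimed.
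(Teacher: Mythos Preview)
Your proposal is correct and follows essentially the same route as the paper's own proof: Pohozaev combined with the level identity and $(V'_2)$ gives \eqref{eq5.6}, the testing identity with \eqref{eq5.3} and the choice $\va=V_0/(4\mu)$ gives \eqref{eq5.7}, and the final reassembly of $\|v\|^2$ via the splitting $\{|v|\leq 1\}\cup\{|v|>1\}$ is exactly what the paper means when it invokes ``the proof of Lemma~\ref{lem2}''. You have simply made that last step explicit.
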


\begin{proof}
By $\wt{I}_\mu(v)=d_\mu$, Lemma \ref{lem5} and $(V'_2)$, we obtain
\begin{equation} \label{eq5.6}
\di_{\R^N} |\nabla v|^2 \ dx\leq N d_\mu.
\end{equation}
Next we estimate the term $\int_{\R^N}V(x)f^2(v)\ dx$. It follows from
$\langle \wt{I}'_\mu(v),v\rangle=0$, Lemma \ref{lem1} and (\ref{eq5.3}) that
$$
\begin{array}{rl}
\df{1}{2}\di_{\R^N}V(x)f^2(v)\ dx
& \leq  \mu \di_{\R^N} \wt{h}(f(v)) f(v)\ dx\\[5mm]
& \leq \mu \va \di_{\R^N} f^2(v) \ dx + C\mu\va^{\frac{q-2^*}{q-2}} \di_{\R^N}|v|^{2^*}\ dx\\[5mm]
& \leq \df{\mu \va}{V_0}\di_{\R^N} V(x)f^2(v)\ dx +  C\mu\va^{\frac{q-2^*}{q-2}}
\left(\di_{\R^N}|\nabla v|^{2}\ dx  \right)^{\frac{2^*}{2}}
\end{array}
$$
Taking $\va=\frac{V_0} {4\mu}$ and using \eqref{eq5.6}, we have
\begin{equation}\label{eq5.7}
\di_{\R^N} V(x)f^2(v) \ dx \leq C \mu^{\frac{2^*-2}{q-2}} d_\mu^{\frac{2^*}{2}}.
\end{equation}
Then desired conclusion follows from \eqref{eq5.6}, \eqref{eq5.7} and the proof of Lemma \ref{lem2}.
\end{proof}

Now we are ready to prove Theorem \ref{thm3}.

\begin{proof}[Proof of Theorem \ref{thm3}]\  Observing that the functional $\wt{I}_\mu$ has
the Mountain Pass geometry, we can define
$$d_\mu=\inf\limits_{\gamma\in\Gamma}\max\limits_{t\in [0,1]}\wt{I}_\mu(\gamma(t))>0,$$
where $\Gamma=\{\gamma\in C([0,1],H_r^1(\R^N))\ |\ \gamma(0)=0,\ \wt{I}_\mu(\gamma(1))<0\}.$
Since $\wt{h}$ satisfies $(h_1)-(h_4)$, Theorem \ref{thm1} implies that there is a positive
critical point $v_\mu$ of $\wt{I}_\mu$ with $I_\mu(v_\mu)=d_\mu$.

Let $v_0\in C_0^\infty(\R^N)\setminus\{0\}$ be a nonnegative radially symmetric function such
that $\wt{I}_\mu(v_0)<0$. Then, by \eqref{eq5.4} with $T=|v_0|_\infty$ and properties of $f$,
we have
\begin{equation*}
\begin{array}{rl}
d_\mu & \leq \max\limits_{t\in [0,1]}\wt{I}_\mu(tv_0) \\[5mm]
& \leq \max\limits_{t\in [0,1]}\left(\df{t^2}{2}\di_{\R^N} \left(|\nabla v_0|^2+V_1 v_0^2 \right) \ dx
-\mu\di_{\R^N} \wt{H}(f(tv_0)) \ dx\right)\\[5mm]
& \leq \max\limits_{t\in [0,1]} \left(\df{t^2}{2}\di_{\R^N}
\left(|\nabla v_0|^2+V_1 v_0^2 \right) \ dx
-C\mu t^p\di_{\R^N} v_0^p \ dx\right)\\[5mm]
& \leq C \mu^{-\frac{2}{p-2}}.
\end{array}
\end{equation*}
Combining this with Lemmas \ref{lem11} and \ref{lem12}, we have
$$|v_\mu|_\infty\leq C \mu^{\frac{(p-q)[2^*(2^*-2)-2(p-2)]-2(2^*-p)(p-2)}{2(p-2)(q-2)(2^*-q)}},$$
for $\mu$ sufficiently large. Consequently, by \eqref{eq1.4}, we obtain a positive solution
$u_\mu=f(v_\mu)$ of problem \eqref{eq5.5} with $|u_\mu|_\infty\leq|v_\mu|_\infty<\delta$ for $\mu$
large enough. Then $u_\mu$ is a positive solution of problem \eqref{eq1.3}.
\end{proof}

\section{Generalized result}

\hspace*{\parindent}  In this section, we apply our methods to \eqref{eq1.2} with a general
nonlinearity of Berestycki and Lions type \cite{BL}. Throughout this section, we assume that
$V\in C^1(\R^N,\R)$ satisfies $(V_1),\ (V_2)$ and
\begin{enumerate}
\item[$(V_3)$] $\lim\limits_{|x|\to\infty} V(x)=V_0$.
\end{enumerate}
The nonlinearity $h$ satisfies $(h_1)$ and
\begin{enumerate}
\item[$(\bar{h}_2)$] $\lim\limits_{t\to+\infty} \df{h(t)}{t^{2\cdot 2^*-1}}=0$;
\item[$(\bar{h}_3)$] there exists $\zeta>0$ such that $H(\zeta)>\df{V_0}{2}\ \zeta^2$.
\end{enumerate}

\begin{Thm} \label{thm4}
Suppose that $V$ satisfies $(V_1),\ (V_2),\ (V_3)$ and $h$ satisfies $(h_1),\ (\bar{h}_2),\ (\bar{h}_3)$.
Then problem \eqref{eq1.2} has at least a positive solution.
\end{Thm}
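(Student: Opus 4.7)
The plan is to run the same three-step strategy as in Theorem \ref{thm1} (Jeanjean's monotonicity family, compactness in the radial subspace, Pohozaev boundedness), replacing the superlinearity $(h_3)$ by the single-point condition $(\bar{h}_3)$ together with $(V_3)$ in the construction of a Mountain-Pass path, and handling the weaker growth $(\bar{h}_2)$ by direct pointwise estimates.

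First I would verify that the pointwise estimate $|h(f(t))f'(t)t|\leq\va f^2(t)+C(\va)|t|^{2^*}$ used in Lemma \ref{lem6}, together with the analogous bound $|H(f(t))|\leq\va f^2(t)+C(\va)|t|^{2^*}$ used in the proof of Lemma \ref{lem2}(4), remain valid under $(h_1)$ and $(\bar{h}_2)$. The paper's derivation relies only on $h(s)=o(s)$ at $0$ and $h(s)=o(s^{2\cdot 2^*-1})$ at infinity together with parts (3), (6), (7) of Lemma \ref{lem1}, so it applies verbatim. As a byproduct, $h(f(v))f'(v)$ is strictly subcritical in $v$, which makes the compactness of bounded Palais-Smale sequences via the compact embedding $H_r^1(\R^N)\hookrightarrow L^p(\R^N)$, $p\in(2,2^*)$, go through as in Lemma \ref{lem3}.

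Next I would build the Mountain-Pass path. Let $\zeta$ be as in $(\bar{h}_3)$ and take a nonnegative radial $u\in C_0^\infty(\R^N)$ whose restriction to a large ball equals $\zeta$; by enlarging this ball the ratio of $\di_{\R^N} H(u)\,dx$ to $\di_{\R^N} u^2\,dx$ can be made arbitrarily close to $H(\zeta)/\zeta^2>V_0/2$. Fix $\lb_0\in(0,1)$ with $2\lb_0\di_{\R^N} H(u)\,dx>V_0\di_{\R^N} u^2\,dx$, work in $X=H_r^1(\R^N)$, and consider the family
\begin{equation*}
I_\lb(v)=\df{1}{2}\di_{\R^N}(|\nabla v|^2+V(x)f^2(v))\,dx-\lb\di_{\R^N}H(f(v))\,dx,\qquad \lb\in[\lb_0,1].
\end{equation*}
For $u_t(x)=u(x/t)$ and $v_0=f^{-1}(u_t)$, a scaling computation gives
\begin{equation*}
I_\lb(v_0)=t^{N-2}\Bigl[\df{1}{2}\di_{\R^N}|\nabla u|^2\,dx+\di_{\R^N}u^2|\nabla u|^2\,dx\Bigr]+\df{t^N}{2}\Bigl[\di_{\R^N}V(ty)u^2\,dy-2\lb\di_{\R^N}H(u)\,dx\Bigr].
\end{equation*}
By $(V_3)$ and dominated convergence, $\di_{\R^N}V(ty)u^2\,dy\to V_0\di_{\R^N}u^2\,dy$ as $t\to\infty$; the choice of $\lb_0$ then makes the $t^N$-bracket strictly negative uniformly in $\lb\in[\lb_0,1]$, so $I_\lb(v_0)\to-\infty$ uniformly, yielding a common Mountain-Pass endpoint.

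Proposition \ref{prop1} formally requires $B(v)=\di_{\R^N}H(f(v))\,dx\geq 0$, which the hypotheses do not grant. As in Remark 1.1(3), I would precede the whole argument by a smooth truncation replacing $h(t)$ for $t>T_0$ with a polynomial expression that satisfies $(h_2)$ and $(h_3)$, followed by the shift $V\mapsto V+M$, $h\mapsto h+Mt$ for $M$ large enough that $h(t)+Mt\geq 0$ on $\R^+$. The path $v_0$ above has $0\leq f(v_0)\leq\zeta<T_0$ and is unaffected; the Moser iteration of Lemma \ref{lem11} bounds the $L^\infty$-norm of the resulting solution independently of $T_0$ (the Mountain-Pass level is controlled by the un-truncated test path), and taking $T_0$ above this bound makes the truncation inactive, producing a solution of the original equation. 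The remainder---Proposition \ref{prop1} applied with $\cj=[\lb_0,1]$, compactness in $H_r^1(\R^N)$ for a.e.\ $\lb$ (Lemma \ref{lem3}), the sequence $\lb_n\to 1^-$ with $I_{\lb_n}(v_{\lb_n})=c_{\lb_n}\leq c_{\lb_0}$, Pohozaev-based boundedness (Lemma \ref{lem5} together with $(V_2)$), and a final compactness step---is a direct transcription of the proof of Theorem \ref{thm1}. The main obstacle is the quantitative Mountain-Pass step: $(\bar{h}_3)$ gives only the single-point inequality $2H(\zeta)>V_0\zeta^2$, so the bump $u$ has to be sculpted to push the ratio $\int H(u)/\int u^2$ above $V_0/2$ and the interval $[\lb_0,1]$ must be shortened accordingly, in order to beat the factor $2\lb$ against $V_0$ in the $t^N$-coefficient.
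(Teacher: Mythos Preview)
Your overall strategy (Jeanjean's monotonicity trick in $H_r^1$, Pohozaev boundedness via $(V_2)$, Mountain-Pass path built from $(\bar h_3)$ together with $(V_3)$) is on target, but the way you handle the condition $B(v)\geq 0$ and the compactness step diverges from the paper and contains a real gap.

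The paper does \emph{not} truncate. It writes $h=h_1-h_2$ with $h_1=\max\{h,0\}$, $h_2=\max\{-h,0\}$, and sets
\[
A(v)=\tfrac12\!\int\!\big(|\nabla v|^2+Vf^2(v)\big)+\!\int\! H_2(f(v)),\qquad B(v)=\!\int\! H_1(f(v)),
\]
so that $B\geq 0$ is automatic. The Mountain-Pass endpoint is built from a Berestycki--Lions profile $\bar u$ satisfying $\int(H(\bar u)-\tfrac{V_0}{2}\bar u^2)>0$, which after choosing $\bar\lb<1$ close to $1$ yields $\bar\lb\int H_1(\bar u)>\int(H_2(\bar u)+\tfrac{V_0}{2}\bar u^2)$; scaling $\bar u(x/t)$ and using $(V_3)$ gives $J_{\bar\lb}(\bar u(\cdot/t))<0$ for $t$ large. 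Compactness of bounded Palais--Smale sequences (Lemma~\ref{lem16}) is obtained not via the $L^{q/2}$ argument of Lemma~\ref{lem3}, but via Strauss's compactness lemma (Lemma~\ref{lem15}) combined with a Brezis--Lieb decomposition (Lemma~\ref{lem14}); this is essential because under $(\bar h_2)$ one only has $h(f(t))f'(t)=o(|t|^{2^*-1})$, which is critical in $v$ and does \emph{not} fall under the strictly subcritical framework of Lemma~\ref{lem3}. Your sentence ``$h(f(v))f'(v)$ is strictly subcritical in $v$'' is therefore incorrect as stated.

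The gap in your alternative route is the claim that the Moser bound of Lemma~\ref{lem11} is independent of the truncation level $T_0$. That lemma reads $|v|_\infty\leq C(\mu\|v\|^{q-2})^{1/(2^*-q)}\|v\|$, and the constant $C$ comes from the inequality $\wt h(t)\leq C t^{q-1}$; any continuous extension of $h$ beyond $T_0$ that matches $h$ on $[0,T_0]$ forces this growth constant to depend on $\sup_{[0,T_0]}|h|/T_0^{\,q-1}$, hence on $T_0$. So the loop ``truncate at $T_0$, bound $|v|_\infty$ independently of $T_0$, then take $T_0$ above this bound'' does not close with the tools you invoke. The $h_1$--$h_2$ splitting in the paper sidesteps this entirely: it secures $B\geq 0$ without changing the equation, so no a posteriori $L^\infty$ control is needed, and the only new ingredient relative to Theorem~\ref{thm1} is the Strauss-type compactness in Lemma~\ref{lem16}.
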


Define $h_1=\max\{h,0\}$ and $h_2=\max\{-h,0\}$, then $h=h_1-h_2$. By $(\bar{h}_3)$, there
exists $\bar{u}\in H_r^1(\R^N)\cap L^\infty(\R^N)$ (see \cite{BL}) such that
$$\di_{\R^N}H_1(\bar{u})\ dx-\di_{\R^N}\left(H_2(\bar{u})+\df{V_0}{2}\ \bar{u}^2\right)\ dx=
\di_{\R^N}\left(H(\bar{u})-\df{V_0}{2}\ \bar{u}^2\right)\ dx>0,$$
where $H_i(t)=\int_0^t h_i(s) ds,\ i=1,2.$ Thus, for some $\bar{\lb}\in (0,1)$, we have
\begin{equation}\label{eq6.1}
\bar{\lb}\di_{\R^N}H_1(\bar{u})\ dx-\di_{\R^N}\left(H_2(\bar{u})+\df{V_0}{2}\ \bar{u}^2\right)\ dx>0.
\end{equation}

As in Section 3, we introduce a family of functionals
$$I_\lb(v)=\df{1}{2}\di_{\R^N} \left(|\nabla v|^2+V(x) f^2(v) \right)\ dx +\di_{\R^N} H_2(f(v))\ dx
-\lb\di_{\R^N} H_1(f(v))\ dx,$$
where $\lb\in[\bar{\lb},1]$. Set
$$A(v)=\frac{1}{2}\di_{\R^N} \left(|\nabla v|^2+V(x) f^2(v) \right)\ dx+\di_{\R^N} H_2(f(v))\ dx$$
and
$$B(v)=\di_{\R^N} H_1(f(v))\ dx,$$
then $I_\lb(v)=A(v)-\lb B(v)$. Similar to Lemma \ref{lem2}, we have

\begin{Lem} \label{lem13}
Assume that $(V_1),\ (V_3),\ (h_1),\ (\bar{h}_2)$ and $(\bar{h}_3)$ hold. Then\\
(1) $B(v)\geq 0$ for all $v\in H_r^1(\R^N)$;\\
(2) $A(v)\to \infty$ as $\|v\|\to\infty$;\\
(3) there exists $v_0\in H_r^1(\R^N)$, independent of $\lb$, such that $I_\lb(v_0)<0$ for all $\lb\in[\bar{\lb},1]$;\\
(4) for all $\lb\in[\bar{\lb},1]$, it holds
$$c_\lb=\inf_{\gamma\in\Gamma}\max_{t\in [0,1]}I_\lb(\gamma(t))>\max\{I_\lb(0),I_\lb(v_0)\},$$
\quad\ \ where $\Gamma=\{\gamma\in C([0,1],H_r^1(\R^N))\ |\ \gamma(0)=0,\ \gamma(1)=v_0\}.$
\end{Lem}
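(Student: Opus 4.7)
The plan is to parallel Lemma \ref{lem2}, with two steps requiring genuine new ideas: the construction of $v_0$ in part (3), which must use the Berestycki--Lions function $\bar u$ and a scaling that respects the nonautonomous, quasilinear structure, and the mountain-pass lower bound in part (4) under the weaker growth hypothesis $(\bar h_2)$. Parts (1) and (2) are routine adaptations: (1) is immediate from $h_1\geq 0$, and for (2) the extra term $\int_{\R^N} H_2(f(v))\,dx\geq 0$ can simply be dropped, reducing the argument to the coercivity estimate for $A$ in Lemma \ref{lem2}(2).

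For part (3) I would work with $J_\lb(u):=I_\lb(f^{-1}(u))$, which contains the kinetic, potential, quasilinear $\int u^2|\nabla u|^2\,dx$, and $(H_2-\lb H_1)$ contributions, and apply the dilation $u_t(x):=\bar u(x/t)$ to the Berestycki--Lions function $\bar u$ from \eqref{eq6.1}. After dividing by $t^N$, the kinetic term and the quasilinear term both carry a factor $t^{-2}$ and vanish as $t\to\infty$. By $(V_3)$ and dominated convergence (using $V_0\leq V(tx)\leq V_1$ and $\bar u^2\in L^1$), the potential integral $\int V(tx)\bar u^2\,dx$ converges to $V_0\int\bar u^2\,dx$. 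The resulting limit of $t^{-N}J_\lb(u_t)$ equals $\tfrac{V_0}{2}\int\bar u^2+\int H_2(\bar u)-\lb\int H_1(\bar u)$, which is exactly the left-hand side of \eqref{eq6.1} with sign flipped for $\lb=\bar\lb$, hence strictly negative; since $\int H_1(\bar u)\geq 0$ the expression decreases in $\lb$ and stays negative on all of $[\bar\lb,1]$. Picking $T$ large then gives $J_\lb(u_T)<0$ uniformly in $\lb$, and I take $v_0:=f^{-1}(u_T)$, which lies in $H_r^1(\R^N)$ because $\bar u\in L^\infty\cap H^1$ and $|f^{-1}(s)|\leq C(|s|+s^2)$ by Lemma \ref{lem1}.

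For part (4) I would mimic the endgame of Lemma \ref{lem2}(4). Drop the nonnegative $\int H_2(f(v))$ and use $V(x)\geq V_0$; combining $(h_1)$ (giving $h_1(t)=o(t)$ at zero) with $(\bar h_2)$ (giving $h_1(t)=o(t^{2\cdot 2^*-1})$ at infinity) yields $h_1(t)\leq\va t+C_\va t^{2\cdot 2^*-1}$, and Lemma \ref{lem1}(2)--(3) then bounds $\int H_1(f(v))\,dx$ by $\va\int f^2(v)+C_\va\int|v|^{2^*}$ (the critical power $2\cdot 2^*$ of $f(v)$ drops to $2^*$ of $v$ via Lemma \ref{lem1}(3)). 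Choosing $\va$ small absorbs the $\va$ term into $\tfrac{V_0}{2}\int f^2(v)$, and the coercivity bound of Lemma \ref{lem2}(2) gives $I_\lb(v)\geq c\|v\|^2-C\|v\|^{2^*}$ on small balls, with constants independent of $\lb\in[\bar\lb,1]$. Every admissible path from $0$ to $v_0$ must cross a small sphere on which $I_\lb>0$, so $c_\lb>0=\max\{I_\lb(0),I_\lb(v_0)\}$.

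The main obstacle is the scaling step in part (3). In the classical autonomous Berestycki--Lions setting the dilation $\bar u(x/t)$ reduces everything to \eqref{eq6.1} by a direct computation, but here one must handle simultaneously the nonautonomous potential and the quasilinear contribution $\int u^2|\nabla u|^2\,dx$. The choice of dilation $\bar u(x/t)$ (rather than $t\bar u(x/t)$ as used in Lemma \ref{lem2}(3)) is essential: it makes all ``mass'' terms scale uniformly as $t^N$ while the kinetic and quasilinear terms scale as $t^{N-2}$ and therefore drop out after normalization; hypothesis $(V_3)$ extracts the autonomous limit $V_0\int\bar u^2$ via dominated convergence, and \eqref{eq6.1} supplies the strict negativity that remains monotone in $\lb$.
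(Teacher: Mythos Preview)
Your proposal is correct and follows essentially the same approach as the paper. The paper writes out only part (3), using precisely your dilation $\bar u(x/t)$ applied to $J_{\bar\lb}$ together with $(V_3)$ and \eqref{eq6.1}; for (1), (2), and (4) your arguments are the natural adaptations of Lemma~\ref{lem2} that the paper treats as understood, with the only minor comment that in (4) it is cleaner to follow Lemma~\ref{lem2}(4) verbatim (packaging $-\tfrac{V_0}{2}f^2(t)+H_1(f(t))$ and using the two limits to get $\le -\tfrac{V_0}{4}t^2+C|t|^{2^*}$) rather than to invoke the coercivity inequality of Lemma~\ref{lem2}(2) and invert it.
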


\begin{proof}
We only prove (3). Set
$$J_\lb(u)=\df{1}{2}\di_{\R^N}(|\nabla u|^2+V(x)u^2)\ dx + \di_{\R^N}u^2|\nabla u|^2\ dx
+\di_{\R^N} H_2(u)\ dx-\lb \di_{\R^N} H_1(u)\ dx.$$
For any $t>0$, we have
\begin{equation*}
\begin{array}{rl}
J_{\bar{\lb}}(\bar{u}(x/t))
& = \df{t^{N-2}}{2}\di_{\R^N} |\nabla \bar{u}|^2\ dx +\df{t^{N}}{2}\di_{\R^N} V(tx) \bar{u}^2\ dx\\[6mm]
&\qquad\quad+\ t^{N-2}\di_{\R^N} \bar{u}^2 |\nabla \bar{u}|^2\ dx+t^N\di_{\R^N} H_2(\bar{u})\ dx
-\bar{\lb}t^N \di_{\R^N} H_1(\bar{u})\ dx\\[6mm]
&= \df{t^{N-2}}{2}\left[\di_{\R^N} |\nabla \bar{u}|^2 \ dx
+ 2\di_{\R^N} \bar{u}^2 |\nabla \bar{u}|^2\ dx\right]\\[6mm]
&\qquad\quad+\ t^N\left[\df{1}{2}\di_{\R^N}V(tx)\bar{u}^2\ dx+\di_{\R^N} H_2(\bar{u})\ dx
-\bar{\lb}\di_{\R^N} H_1(\bar{u})\ dx\right].
\end{array}
\end{equation*}
By $(V_3)$ and \eqref{eq6.1}, it is easy to see that $J_{\bar{\lb}}(\bar{u}(x/t))<0$ for $t$
large. Then desired result follows.
\end{proof}

Arguing as the proof of Brezis-Lieb lemma \cite{BrL}, we have

\begin{Lem} \label{lem14}
Assume that $g\in C(\R,\R)$ satisfies
$$|g(t)|\leq C(|t|+|t|^{p-1}),\quad 2<p<2^*.$$
If $\{v_n\}$ is bounded in $H^1(\R^N)$ and  $v_n\to v\ \hbox{a.e. in}\ \R^N,$ then
$$\di_{\R^N}\left(G(v_n)-G(v_n-v)-G(v)\right)\ dx\to 0,\quad\hbox{as}\ n\to\infty,$$
where $G(t)=\int_0^t g(s) ds$.
\end{Lem}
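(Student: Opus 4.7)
The plan is to adapt the classical Brezis-Lieb splitting argument. The first step is to establish the pointwise inequality
\begin{equation*}
|G(a+b)-G(a)-G(b)| \leq \va\left(|a|^2+|a|^p\right) + C_\va\left(|b|^2+|b|^p\right),
\end{equation*}
valid for all $a,b\in\R$ and every $\va>0$, with $C_\va$ independent of $a,b$. To derive it I would begin from the identity
\begin{equation*}
G(a+b)-G(a)-G(b) = \di_0^1 \left[g(a+tb)-g(tb)\right]b\,dt,
\end{equation*}
which follows from $G(0)=0$ after the substitutions $s=a+tb$ and $s=tb$. Combining the growth assumption on $g$ with the elementary bound $(|a|+|b|)^{p-1}\leq 2^{p-2}(|a|^{p-1}+|b|^{p-1})$ yields
\begin{equation*}
|G(a+b)-G(a)-G(b)| \leq C\left(|a||b|+|b|^2+|a|^{p-1}|b|+|b|^p\right),
\end{equation*}
and Young's inequality applied to the cross terms $|a||b|$ and $|a|^{p-1}|b|$, carried out so that the $\va$-coefficient multiplies only powers of $|a|$, produces the desired splitting.

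Next I would apply this estimate with $a=v_n-v$ and $b=v$. Three ingredients then come into play: (i) $|G(v_n)-G(v_n-v)-G(v)|\to 0$ a.e. in $\R^N$, which is immediate from $v_n\to v$ a.e. together with the continuity of $G$; (ii) $\{v_n-v\}$ is uniformly bounded in $L^2(\R^N)\cap L^p(\R^N)$, since $\{v_n\}$ is bounded in $H^1(\R^N)$ and the Sobolev embedding $H^1(\R^N)\hookrightarrow L^p(\R^N)$ holds for $2\leq p\leq 2^*$; (iii) $v\in L^2(\R^N)\cap L^p(\R^N)$ by Fatou's lemma applied to $\{v_n\}$.

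To finish, I would apply Fatou's lemma to the nonnegative sequence
\begin{equation*}
\phi_n := \va\left(|v_n-v|^2+|v_n-v|^p\right) + C_\va\left(|v|^2+|v|^p\right) - \left|G(v_n)-G(v_n-v)-G(v)\right|,
\end{equation*}
whose pointwise a.e. limit equals $C_\va\left(|v|^2+|v|^p\right)$. The constant part cancels on both sides of the Fatou inequality, leaving
\begin{equation*}
\limsup_{n\to\infty}\di_{\R^N}\left|G(v_n)-G(v_n-v)-G(v)\right|\,dx \leq \va\,M,
\end{equation*}
where $M=\sup_n\di_{\R^N}(|v_n-v|^2+|v_n-v|^p)\,dx<\infty$. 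Letting $\va\to 0$ gives the conclusion, since the signed integrand in the statement is dominated by its absolute value.

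The main technical point is the pointwise splitting inequality: the cross term $|a|^{p-1}|b|$ must be handled by Young's inequality so that the small parameter $\va$ multiplies only $|a|^p$ and never $|b|^p$. This is what guarantees that the $n$-dependent piece in $\phi_n$ remains uniformly small while the $v$-dependent piece serves as a fixed integrable majorant that drops out of the Fatou inequality. Once this splitting is secured, the remainder of the argument is a routine Fatou passage using only $H^1$-boundedness of $\{v_n-v\}$ and the Sobolev embedding.
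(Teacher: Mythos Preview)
Your proposal is correct and is exactly the approach the paper indicates: the paper gives no detailed proof but simply writes ``Arguing as the proof of Brezis--Lieb lemma \cite{BrL}, we have'', and what you have written out is precisely that argument --- the pointwise splitting inequality with a small parameter on the $a$-terms followed by a Fatou passage. The only cosmetic remark is that the Fatou step deserves one extra line of care (from $0\le\liminf_n(A_n-B_n)$ and $A_n\le\va M$ one gets $\limsup_n B_n\le\va M$), but the logic is sound.
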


The following lemma was due to Strauss \cite{St} (see also \cite{BL}).

\begin{Lem} \label{lem15}
Let $P$ and $Q$ be two continuous functions satisfying $P(t)/Q(t)\to 0$ as $t\to\infty$. If
$\{v_n\}$ is a sequence of measurable functions from $\R^N$ to $\R$ such that
$$\sup\limits_{n}\di_{\R^N}|Q(v_n)|\ dx<\infty$$
and $P(v_n)\to v\ \hbox{a.e. in}\ \R^N,$ then one has
$$\di_{\Omega}|P(v_n)-v|\ dx\to 0,\quad\hbox{as}\ n\to\infty,$$
for any bounded Borel set $\Omega$. Moreover, if one assumes also that $P(t)/Q(t)\to 0$ as $t\to 0$
and $\sup\limits_n |v_n(x)|\to 0$ as $|x|\to\infty$, then
$$\di_{\R^N}|P(v_n)-v|\ dx\to 0,\quad\hbox{as}\ n\to\infty.$$
\end{Lem}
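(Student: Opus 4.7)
The plan is to derive the local ($L^1$ on a bounded $\Omega$) conclusion from Vitali's convergence theorem, and the global ($L^1$ on $\R^N$) conclusion by combining this with a uniform tail estimate on the complement of a large ball. The hypothesis $P(t)/Q(t)\to 0$ as $|t|\to\infty$ is what makes $\{P(v_n)\}$ uniformly integrable once we know $\int|Q(v_n)|\,dx$ is uniformly bounded, and the additional smallness of $P/Q$ near zero is what controls the region where $v_n(x)$ is small.

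For the first assertion I fix $\va>0$ and choose $M=M(\va)$ large so that $|P(t)|\leq\va|Q(t)|$ for $|t|\geq M$, which is possible by the growth hypothesis at infinity. For any measurable $E\subset\Omega$, I split the integral and estimate
$$\di_E|P(v_n)|\,dx=\di_{E\cap\{|v_n|<M\}}|P(v_n)|\,dx+\di_{E\cap\{|v_n|\geq M\}}|P(v_n)|\,dx\leq |E|\sup_{|t|\leq M}|P(t)|+\va\sup_n\di_{\R^N}|Q(v_n)|\,dx.$$
Taking $\va$ small first and then $|E|$ small produces a modulus of absolute continuity uniform in $n$, i.e.\ $\{P(v_n)\}$ is equi-integrable on $\Omega$. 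Since $|\Omega|<\infty$ and $P(v_n)\to v$ almost everywhere, Vitali's convergence theorem yields $\int_\Omega|P(v_n)-v|\,dx\to 0$ (and Fatou's lemma gives $v\in L^1(\Omega)$ along the way).

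For the second assertion I fix $\va>0$ and use the extra assumption $P(t)/Q(t)\to 0$ as $t\to 0$ to pick $\eta=\eta(\va)>0$ with $|P(t)|\leq\va|Q(t)|$ for $|t|\leq\eta$. The uniform decay $\sup_n|v_n(x)|\to 0$ as $|x|\to\infty$ then furnishes $R>0$ such that $|v_n(x)|\leq\eta$ for every $|x|\geq R$ and every $n$. Consequently
$$\di_{\R^N\setminus B_R}|P(v_n)|\,dx\leq\va\sup_n\di_{\R^N}|Q(v_n)|\,dx,$$
which is $O(\va)$ uniformly in $n$, and Fatou's lemma gives the same $O(\va)$ bound for $\int_{\R^N\setminus B_R}|v|\,dx$. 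Applying the first part with $\Omega=B_R$ controls $\int_{B_R}|P(v_n)-v|\,dx$; combining the two estimates shows $\limsup_n\int_{\R^N}|P(v_n)-v|\,dx=O(\va)$, and letting $\va\to 0$ concludes the proof.

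The main obstacle is essentially the order of quantifiers: one must select $\va$ (the growth factor) \emph{before} $|E|$ in the first step and \emph{before} $R$ in the second, so that the large-value (respectively small-value) regime is absorbed uniformly by the fixed bound on $\int|Q(v_n)|\,dx$; once the bookkeeping is arranged this way, Vitali closes the argument with no further analytic input.
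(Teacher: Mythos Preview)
The paper does not supply its own proof of this lemma; it is stated with attribution to Strauss \cite{St} (and Berestycki--Lions \cite{BL}) and left unproved. Your argument is correct and is precisely the classical one found in those references: uniform integrability of $\{P(v_n)\}$ on finite-measure sets via the decay of $P/Q$ at infinity, Vitali's theorem for the local conclusion, and a uniform tail cut via the decay of $P/Q$ at zero together with $\sup_n|v_n(x)|\to 0$ for the global conclusion. One cosmetic point: you use $|P(t)|\le\va|Q(t)|$ for $|t|\ge M$, which formally needs the ratio to vanish as $|t|\to\infty$ (both directions), whereas the lemma as written in the paper says only $t\to\infty$; this is the standard formulation in \cite{BL,St} and is what the applications in Section~6 actually require, so the discrepancy is in the paper's transcription, not in your proof.
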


\begin{Lem} \label{lem16}
Assume that $\{v_n\}\subset H_r^1(\R^N)$ is a bounded Palais-Smale sequence for $I_\lb$
satisfying $\lim\limits_{n\to\infty}I_\lb(v_n)=c_\lb$. Then, up to a subsequence, $\{v_n\}$
converges to a positive critical point $v_\lb$ of $I_\lb$ with $I_\lb(v_\lb)= c_\lb$.
\end{Lem}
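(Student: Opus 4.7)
The plan is to adapt the Berestycki--Lions-type compactness scheme to the quasilinear setting, combining the Strauss-type compactness result (Lemma \ref{lem15}) with the Brezis--Lieb decomposition (Lemma \ref{lem14}) introduced in this section.

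First I would extract a subsequence such that $v_n\rightharpoonup v_\lb$ weakly in $H_r^1(\R^N)$, $v_n\to v_\lb$ a.e.\ in $\R^N$, and $v_n\to v_\lb$ in $L^p_{\mathrm{loc}}(\R^N)$ for $p\in[2,2^*)$. The Strauss radial lemma together with the $H_r^1$-boundedness gives the uniform decay $|v_n(x)|\le C|x|^{-(N-1)/2}$, so in particular $\sup_n|v_n(x)|\to 0$ as $|x|\to\infty$. I would then apply Lemma \ref{lem15} with $Q(t)=t^2+|t|^{2^*}$ (whose integral on $\{v_n\}$ is uniformly bounded by Sobolev) and $P\in\{H_i(f(\cdot)),\,h_i(f(\cdot))f'(\cdot)\,\cdot\,\}$ for $i=1,2$. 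The ratio $P/Q$ vanishes at $t=0$ by $(h_1)$ and at $t=\infty$ by $(\bar h_2)$ together with the asymptotics $f(t)\sim(2t)^{1/2}$, $f'(t)=O(t^{-1/2})$ from Lemma \ref{lem1}, yielding
\[
\int_{\R^N}H_i(f(v_n))\,dx\to\int_{\R^N}H_i(f(v_\lb))\,dx,\qquad\int_{\R^N}h_i(f(v_n))f'(v_n)v_n\,dx\to\int_{\R^N}h_i(f(v_\lb))f'(v_\lb)v_\lb\,dx.
\]
To identify $v_\lb$ as a critical point, I would test $\langle I'_\lb(v_n),\varphi\rangle=o(1)$ against radial $\varphi\in C_0^\infty(\R^N)$, pass to the limit using local $L^p$-compactness on $\operatorname{supp}\varphi$ and weak convergence of gradients, and conclude by density together with Palais's principle of symmetric criticality (applicable since $(V_1)$ makes $I_\lb$ $O(N)$-invariant) that $v_\lb$ is a critical point of $I_\lb$ on $H^1(\R^N)$.

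For strong convergence, set $w_n:=v_n-v_\lb\rightharpoonup 0$ in $H_r^1$. Brezis--Lieb in $L^2$ yields $\int|\nabla v_n|^2=\int|\nabla v_\lb|^2+\int|\nabla w_n|^2+o(1)$, and Lemma \ref{lem14} applied to $g(t)=2f(t)f'(t)$---which satisfies $|g(t)|\le 2|t|$, hence meets the growth hypothesis for any $p\in(2,2^*)$---gives $\int f^2(v_n)=\int f^2(v_\lb)+\int f^2(w_n)+o(1)$. Splitting $V(x)=V_0+(V(x)-V_0)$ and using $(V_3)$ together with the uniform radial decay, dominated convergence yields $\int(V-V_0)f^2(v_n)\to\int(V-V_0)f^2(v_\lb)$. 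Combining with $I_\lb(v_n)\to c_\lb$ and the convergences of the $H_i$-terms gives
\[
\tfrac{1}{2}\int|\nabla w_n|^2+\tfrac{V_0}{2}\int f^2(w_n)=c_\lb-I_\lb(v_\lb)+o(1),
\]
while the analogous expansion of $\langle I'_\lb(v_n),v_n\rangle-\langle I'_\lb(v_\lb),v_\lb\rangle=o(1)$, coupled with the comparison $\tfrac{1}{2}f^2(t)\le f(t)f'(t)t\le f^2(t)$ of Lemma \ref{lem1}(7), forces the right-hand side to vanish; Lemma \ref{lem1}(8) combined with Sobolev then upgrades $\int|\nabla w_n|^2+\int f^2(w_n)\to 0$ to $\|w_n\|\to 0$. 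Thus $I_\lb(v_\lb)=c_\lb$ by continuity. Positivity of $v_\lb$ follows by testing $\langle I'_\lb(v_\lb),v_\lb^-\rangle=0$: since $h_i\circ f$ vanish on $(-\infty,0]$, this forces $v_\lb^-=0$, and nontriviality together with the strong maximum principle yields $v_\lb>0$.

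The main obstacle is the weighted quadratic term $\int V(x)f^2(v_n)$. Since $f^2(t)/(t^2+|t|^{2^*})\to 1$ at $t=0$, Lemma \ref{lem15} cannot be applied to $f^2$ directly; the workaround is to combine Brezis--Lieb (Lemma \ref{lem14}) with the $V$-splitting afforded by $(V_3)$. A second technical point is that the effective growth of $h_i(f(v))f'(v)$ is exactly $o(|v|^{2^*-1})$---Sobolev critical on the semilinear side---so it is precisely the strict `$o$' in $(\bar h_2)$ that makes Lemma \ref{lem15} applicable and restores compactness at infinity.
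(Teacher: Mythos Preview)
Your proposal is correct and follows essentially the same route as the paper: extract a weak limit, use the Strauss compactness lemma (Lemma~\ref{lem15}) with $Q(t)=t^2+|t|^{2^*}$ to handle the nonlinear terms, apply the Brezis--Lieb splitting (Lemma~\ref{lem14}) to the quadratic pieces, and conclude $w_n\to 0$ via Lemma~\ref{lem1}(7) and the coercivity argument of Lemma~\ref{lem2}. The only organizational differences are that the paper works directly with $\langle I'_\lb(w_n),w_n\rangle=o(1)$ and applies Lemma~\ref{lem15} to $w_n$ rather than to $v_n$, and it absorbs the bounded weight $V$ into the Brezis--Lieb step rather than splitting $V=V_0+(V-V_0)$ via $(V_3)$; your extra energy-decomposition step is not needed but does no harm.
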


\begin{proof}
First of all we may assume $v_n\rightharpoonup v_\lb\ \hbox{in}\ H_r^1(\R^N)$ and
$v_n\to v_\lb\ \hbox{a.e. in}\ \R^N.$ By Lebesgue dominate theorem and Lemma \ref{lem15},
we conclude that $I'_\lb(v_\lb)=0$. Set $w_n=v_n-v_\lb$. Using Lemma \ref{lem14} and Lemma
\ref{lem15} with $Q(t)=t^2+|t|^{2^*}$, we deduce
$$\langle I'_\lb(v_n), v_n \rangle-\langle I'_\lb(v_\lb), v_\lb \rangle-\langle I'_\lb(w_n), w_n \rangle=o(1)$$
and so $\langle I'_\lb(w_n), w_n \rangle=o(1)$. Therefore
\begin{equation*}
\begin{array}{rl}
&\quad\limsup\limits_{n\to\infty}\left(\di_{\R^N}|\nabla w_n|^2\ dx+\df{1}{2}\di_{\R^N}V(x)f^2(w_n)\ dx\right)\\[5mm]
&\leq \limsup\limits_{n\to\infty}\left(\di_{\R^N}|\nabla w_n|^2\ dx+\di_{\R^N}V(x)f(w_n)f'(w_n)w_n\ dx\right)\\[5mm]
&=\limsup\limits_{n\to\infty}\left(\lb\di_{\R^N}h_1(f(w_n))f'(w_n)w_n\ dx-\di_{\R^N}h_2(f(w_n))f'(w_n)w_n\ dx\right)
\\[5mm]
&=0.
\end{array}
\end{equation*}
Combining this with Lemma \ref{lem2}, we obtain $w_n\to 0$ in $H_r^1(\R^N)$. Consequently,
$v_n\to v_\lb$ in $H_r^1(\R^N)$. Hence $v_\lb$ is a nontrivial critical point of $I_\lb$ with
$I(v_\lb)=c_\lb$. A standard argument can show that $v_\lb>0$ in $\R^N$.
\end{proof}

\begin{proof}[Proof of Theorem \ref{thm4}]
It is similar to the proof of Theorem \ref{thm1}.
\end{proof}

\footnotesize

\end{document}